\documentclass[table,a4paper,12pt]{amsart}
\usepackage{amssymb}
\usepackage{amscd}
\usepackage{hyperref}
\usepackage[backend=bibtex]{biblatex}
\usepackage{todonotes}
\usepackage{ytableau}
\usepackage{tikz}
\usetikzlibrary{cd}

\addbibresource{localrules.bib}

\newtheorem{thm}{Theorem}[section]
\newtheorem{defn}[thm]{Definition}
\newtheorem{rem}[thm]{Remark}

\newtheorem{ex}[thm]{Example}
\newtheorem{prop}[thm]{Proposition}

\newtheorem{lemma}[thm]{Lemma}

\newcommand{\Dfn}[1]{\emph{\color{blue}#1}}

\newcommand{\os}[1]{[ #1 ]}
\DeclareMathOperator{\dom}{dom}
\DeclareMathOperator{\sort}{sort} 
\DeclareMathOperator{\End}{End}

\newcommand{\fS}{\mathfrak{S}}
\newcommand{\fC}{\mathfrak{C}}
\newcommand{\fB}{\mathfrak{B}}

\newcommand{\bQ}{\mathbb{Q}}
\newcommand{\bZ}{\mathbb{Z}}
\newcommand{\bN}{\mathbb{N}}

\newcommand{\SL}{\mathrm{SL}}
\newcommand{\Sp}{\mathrm{Sp}}
\newcommand{\GL}{\mathrm{GL}}

\newcommand{\cgd}{{cylindrical growth diagram}}
\newcommand{\wt}{\mathbf{wt}}
\newcommand{\rt}{\mathrm{rect}}
\title{Coboundary categories and local rules}
\author{Bruce W. Westbury}
\email{bruce.westbury@gmail.com}
\date{2017}

\begin{document}

\keywords{coboundary category, cactus group, local rules}
\begin{abstract}
First we develop the theory of local rules for coboundary categories.
Then we describe the local rules in two main cases.
First for the quantum groups in general and in the seminormal
representations of the Hecke algebras. Then for crystals in general
and specifically for crystals of minuscule representations.
Finally we show how growth diagrams can be extended to construct
the action of the cactus group on highest weight words.
\end{abstract}
\maketitle
\tableofcontents

\section{Introduction}
Coboundary categories were originally defined by Drinfel$'$d
and were used to construct the braiding for quantum groups.
This approach fell into relative decline compared to the alternative
approach due to Jimbo. More recently, there has been a renewed
interest in coboundary categories since the combinatorial categories
of crystals are coboundary (but not braided).

The cactus groups are related to coboundary categories in the same
way as the braid groups are related to braided categories.
In particular, for any crystal $C$, the cactus group $\fC_r$
acts on the set of highest weight words in $\otimes^rC$
preserving the weight.

The basic example is to take $C$ to be the
crystal of the vector representation of $\GL(n)$. This gives an
action of $\fC_r$ on standard tableaux of size $r$ preserving the
shape. This action encompasses much of the standard combinatorics
of standard tableaux: each of the following operations on standard
tableaux is given by the action of a specific element of the
cactus group,
\begin{itemize}
	\item evacuation
	\item promotion
	\item dual Knuth moves
\end{itemize}
The coboundary structure on crystals, and other coboundary categories, gives a natural generalisation of these operations.

There are two constructions of the coboundary structure on
crystals. One uses the Lusztig involutions and the other the
Kashiwara involution on the crystal $B_\infty$. The construction
using the Lusztig involutions gives a coboundary structure on the
category of highest weight representations of the quantised
enveloping algebra of a finite dimensional semisimple Lie algebra.
The construction using the Kashiwara involution is more general
since it gives a coboundary structure on the
category of integrable representations of the quantised
enveloping algebra of a symmetrisable Kac-Moody algebra.

These two constructions are both technical. The aim of this paper
is to develop an effective construction of the action of the
cactus groups. This approach is based on the theory of local moves.
In the basic example of tableaux this approach recovers the
construction of promotion and evacuation using Fomin growth diagrams.
These were introduced as an effective construction of these operations
and have superseded the previous constructions using jeu-de-taquin.

The local rules for crystals of minuscule representations and their growth
diagrams are applied in \cite{Pfannerer2018}.

This paper is organised as follows. Section~\ref{sec:coboundary-categories}
gives the basic definitions of coboundary categories and cactus groups.
Then section~\ref{sec:local} follows \cite{Lenart2008} and develops the theory of local moves
in an arbitrary coboundary category. In section~\ref{sec:qg} we
describe the local rules for quantum groups using Drinfel$'$d
unitarisation, \cite{Drinfelprimed1989}, and we give an implicit construction of the
homomorphisms from cactus groups to Hecke algebras by giving the
local rules in the seminormal representations. In section~\ref{sec:crystal}
we give the local rules for the path models of crystals following \cite{Leeuwen1998}.
These local rules are particularly effective for crystals of
minuscule representations. Finally in section~\ref{sec:growth} we
show that constructions of operations on standard tableaux
using growth diagrams generalise to crystals.

\section{Coboundary categories}\label{sec:coboundary-categories}
Coboundary categories are monoidal categories with extra structure
which implies that taking the tensor product of two objects
in the two possible orders gives two objects which are naturally
isomorphic. We assume, for simplicity of exposition, that a monoidal
category means a strict monoidal category.

\subsection{Coboundary categories}
The original definition of a coboundary category from \cite{Drinfelprimed1989} is:
\begin{defn} A \Dfn{coboundary category} is a monoidal category together with natural maps $\sigma_{A,B}\colon A\otimes B \rightarrow B\otimes A$ for all objects $A,B$
	These maps are required to satisfy the three conditions
	\begin{itemize}
		\item $\sigma_{A,I}=1_A$ and $\sigma_{I,B}=1_B$
		\item $\sigma_{A,B}\circ \sigma_{B,A} = 1_{B\otimes A}$
		\item the following diagram commutes
		\begin{equation} \begin{CD} A\otimes B\otimes C @>{1_A\otimes\sigma_{B,C}}>>  A\otimes C\otimes B \\
		@V{\sigma_{A,B}\otimes 1_C}VV @VV{\sigma_{A,C\otimes B}}V \\
		B\otimes A\otimes C @>>{\sigma_{B\otimes A,C}}>  C\otimes B\otimes A
		\end{CD} \end{equation}
	\end{itemize}
\end{defn}

These conditions imply that the following diagram commutes
\begin{equation} \begin{CD} C\otimes B\otimes A @>{\sigma_{C\otimes B,A}}>>  A\otimes C\otimes B \\
@V{\sigma_{C,B\otimes A}}VV @VV{1_A\otimes\sigma_{C,B}}V \\
B\otimes A\otimes C @>>{\sigma_{B,A}\otimes 1_C}>  A\otimes B\otimes C
\end{CD} \end{equation}

\subsection{Cactus groups}
The finite presentations of the cactus groups were originally given
in~\cite{Devadoss1999}.
\begin{defn}\label{defn:cactus} The \Dfn{$r$-fruit cactus group}, $\fC_r$, has generators $s_{p,\,q}$ for $1\le p<q\le r$ and defining relations
\begin{itemize}
	\item $s_{p,\,q}^2=1$
	\item $s_{p,\,q}\, s_{k,\,l}=s_{k,\,l}\, s_{p,\,q}$ if $\os{p,q}\cap\os{k,l}=\emptyset$
	\item $s_{p,\,q}\, s_{k,\,l}=s_{p+q-l,\,p+q-k}\, s_{p,\,q}$ if $\os{k,l}\subseteq\os{p,q}$
\end{itemize}
\end{defn}

Let $\fS_r$ be the symmetric group on $r$ letters.
There is a homomorphism $\fC_r\rightarrow \fS_r$ defined by $s_{p,\,q}\mapsto \widehat{s}_{p,\,q}$ where $\widehat{s}_{p,\,q}$ is the permutation
\begin{equation}
\widehat{s}_{p,\,q}(i)=\begin{cases}
p+q-i & \text{if $p\le i\le q$} \\
i & \text{otherwise}
\end{cases}
\end{equation}

Note that $\fC_r$ is generated by $s_{1,q}$ for $2\le q\le r$, since
\begin{equation}\label{eqn:gen}
s_{p,\,q}=s_{1,\,q}s_{1,\,q-p}s_{1,\,q}
\end{equation}

A different set of generators are the elements $\sigma_{p,\,s,\,q}$ defined by
\begin{equation}\label{eqn:cob}
\end{equation}
Let $\widehat{\sigma}_{p,\,s,\,q}$ be the image of $\sigma_{p,\,s,\,q}$ under the homomorphism $\fC_r\rightarrow \fS_r$. Then $\widehat{\sigma}_{p,\,s,\,q}$ is the permutation
\begin{equation}
\widehat{\sigma}_{p,s,q}(i)=\begin{cases}
i & \text{if $1\le i<p$} \\
q-s+i & \text{if $p\le i\le s$} \\
p-s+i-1 & \text{if $s< i\le q$} \\
i & \text{if $q<i\le r$}
\end{cases}
\end{equation}

The following relations between the two sets of generators are given in \cite[Lemma~3]{Henriques2006}.
\begin{align}\label{eqn:rel}
	\sigma_{p,q,r}\, s_{k,\,l} &=
\begin{cases}
	s_{k+q-r,\,l+q-r}	\sigma_{p,\,q,\,r} &\text{if $\os{k:l}\subseteq\os{p:r}$} \\
	s_{k+p-r-1,\,l+p-r-1}	\sigma_{p,\,q,\,r} &\text{if $\os{k:l}\subseteq\os{r+1:q}$}
\end{cases}	\\
	s_{p,\,q} &= \sigma_{p,\,r,\,q}\, s_{p,\,r}\, s_{r+1,\,q}
\end{align}

Part of the relation between coboundary categories and the cactus groups
is the following two connections.
First, for each $r\ge 0$ and each object $A$ of a coboundary category, there is a
natural action of $\fC_r$ on $\otimes^rA$.
Secondly, the free coboundary category on the category with one
	morphism is the groupoid $\coprod_{r\ge 0} \fC_r$. This gives the
	generators in \eqref{eqn:cob}.

\section{Local rules}\label{sec:local}
First we define local rules in a coboundary category, following \cite{Lenart2008}.
\begin{defn}
	The \Dfn{local rules}, $\tau^A_{B,C}\colon A\otimes B\otimes C \rightarrow  A\otimes C\otimes B$, are defined to be the composite
	\begin{equation}
	A\otimes B\otimes C \xrightarrow{\sigma_{A,B}\otimes 1_C} B\otimes A\otimes C \xrightarrow{\sigma_{B,A\otimes C}} A\otimes C\otimes B
	\end{equation}
\end{defn}

The coboundary structure is recovered by taking
$\sigma_{B,C}=\tau^I_{B,C}$.
The coboundary condition can then be rewritten as
\begin{equation*}
\tau^C_{A,B}\, \sigma_{A,C}\otimes 1_B \, 1_A\otimes\sigma_{B,C}
= 
\sigma_{B,C}\otimes 1_A\, \tau^B_{A,C}\, \sigma_{A,B}\otimes 1_C
\end{equation*}
as maps $A\otimes B\otimes C\to C\otimes B\otimes A$.

In the rest of this section we give some further properties of local rules.

\begin{lemma}\label{lemma:rev} For all $A,B,C$, $\tau^A_{B,C}\,\tau^A_{C,B}=1_{A\otimes B\otimes C}$.
\end{lemma}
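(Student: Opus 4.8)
The plan is to unfold both local rules from the definition and to reduce the resulting composite to the identity using only three facts: the naturality of the maps $\sigma$, the involutivity relation $\sigma_{X,Y}\circ\sigma_{Y,X}=1$, and a single instance of the coboundary axiom. Reading the statement as the assertion that the composite $A\otimes B\otimes C\xrightarrow{\tau^A_{B,C}}A\otimes C\otimes B\xrightarrow{\tau^A_{C,B}}A\otimes B\otimes C$ is the identity, I first expand it as
\[
\tau^A_{C,B}\circ\tau^A_{B,C}=\sigma_{C,A\otimes B}\circ(\sigma_{A,C}\otimes 1_B)\circ\sigma_{B,A\otimes C}\circ(\sigma_{A,B}\otimes 1_C),
\]
a product of four copies of $\sigma$: two acting on a single pair of factors and two acting on a factor against a tensor product.

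Next I would apply naturality of $\sigma$ to move the factor $\sigma_{A,C}\otimes 1_B$ past $\sigma_{B,A\otimes C}$, using the naturality square for $\sigma_{B,-}$ to rewrite
\[
(\sigma_{A,C}\otimes 1_B)\circ\sigma_{B,A\otimes C}=\sigma_{B,C\otimes A}\circ(1_B\otimes\sigma_{A,C}).
\]
This places the two \emph{large} maps $\sigma_{C,A\otimes B}$ and $\sigma_{B,C\otimes A}$ adjacent to one another and gathers the remaining maps on the right.

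The heart of the argument is then the identity
\[
\sigma_{C,A\otimes B}\circ\sigma_{B,C\otimes A}=(\sigma_{B,A}\otimes 1_C)\circ(1_B\otimes\sigma_{C,A}).
\]
I would prove it by invoking the coboundary axiom, namely the commuting square of the definition relabelled by $A\mapsto B,\ B\mapsto A$, which reads $\sigma_{B,C\otimes A}\circ(1_B\otimes\sigma_{A,C})=\sigma_{A\otimes B,C}\circ(\sigma_{B,A}\otimes 1_C)$; solving for $\sigma_{B,C\otimes A}$ and substituting, the pair $\sigma_{C,A\otimes B}\circ\sigma_{A\otimes B,C}$ cancels by involutivity, leaving exactly the right-hand side. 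I expect this to be the main obstacle, since it is the only point at which the coboundary hypothesis is really used: one cannot simplify either large map on its own, as the symmetric-category intuition (which would invoke a hexagon) is not available here.

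Finally, feeding this identity back into the composite gives
\[
(\sigma_{B,A}\otimes 1_C)\circ(1_B\otimes\sigma_{C,A})\circ(1_B\otimes\sigma_{A,C})\circ(\sigma_{A,B}\otimes 1_C),
\]
and two more uses of involutivity finish the proof: the inner pair collapses because $\sigma_{C,A}\circ\sigma_{A,C}=1$, and the outer pair because $\sigma_{B,A}\circ\sigma_{A,B}=1$, so the whole composite telescopes to $1_{A\otimes B\otimes C}$. The reverse composite $\tau^A_{B,C}\circ\tau^A_{C,B}=1_{A\otimes C\otimes B}$ then follows by interchanging the roles of $B$ and $C$ throughout.
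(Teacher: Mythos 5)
Your proof is correct, and it takes a recognisably different route from the paper's even though the ingredients (the coboundary axiom, naturality of $\sigma$, and the involutivity $\sigma_{X,Y}\circ\sigma_{Y,X}=1$) are the same. The paper proves the lemma by exhibiting a single $3\times 3$ commutative diagram whose outer boundary carries $\tau^A_{B,C}$ along the top, $\tau^A_{C,B}$ down the right, and identities (by involutivity) along the left and bottom; its four constituent squares route the computation through the fully reversed object $C\otimes B\otimes A$ and are instances of the coboundary axiom, of the derived companion square recorded after the definition, and of naturality. You never pass through $C\otimes B\otimes A$: you apply naturality once to bring $\sigma_{C,A\otimes B}$ and $\sigma_{B,C\otimes A}$ adjacent, then isolate the single identity $\sigma_{C,A\otimes B}\circ\sigma_{B,C\otimes A}=(\sigma_{B,A}\otimes 1_C)\circ(1_B\otimes\sigma_{C,A})$, derived from one relabelled instance of the axiom ($A\mapsto B$, $B\mapsto A$) together with the cancellation $\sigma_{C,A\otimes B}\circ\sigma_{A\otimes B,C}=1$, after which everything telescopes by involutivity alone. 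Your version is slightly leaner — the coboundary axiom is invoked exactly once, and your intermediate identity makes explicit the one point where the hypothesis is genuinely needed — whereas the paper's grid displays the whole computation at a glance and reuses the two commuting squares it has already stated. All of your type-checks, the naturality square for $\sigma_{B,-}$ applied to $\sigma_{A,C}$, and the substitution $\sigma_{X,Y}^{-1}=\sigma_{Y,X}$ are correct, so the argument is complete.
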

\begin{proof} The following diagram is commutative
\begin{equation}\begin{CD}
A\otimes B\otimes C @>{\sigma_{A,B}\otimes 1_C}>> B\otimes A\otimes C @>{\sigma_{B,A\otimes C}}>> A\otimes C\otimes B \\
@V{1_A\otimes\sigma_{B,C}}VV @V{\sigma_{B\otimes A,C}}VV @VV{\sigma_{A,B}\otimes 1_C}V \\
A\otimes C\otimes B @>>{\sigma_{A,C\otimes B}}> C\otimes B\otimes A @>{1_C\otimes\sigma_{B,A}}>> C\otimes A\otimes B \\
@V{1_A\otimes\sigma_{C,B}}VV @VV{\sigma_{C,B}\otimes 1_A}V @VV{\sigma_{B,A\otimes C}}V \\
A\otimes B\otimes C @>>{\sigma_{A,B\otimes C}}> B\otimes C\otimes A @>>{\sigma_{B\otimes C,A}}> A\otimes B\otimes C
\end{CD}\end{equation}
This completes the proof since the top edge is $\tau^A_{B,C}$, the right edge is $\tau^A_{C,B}$,
the left edge is $1_{A\otimes B\otimes C}$ and
the bottom edge is $1_{A\otimes B\otimes C}$.
\end{proof}

\begin{lemma}\label{lemma:addb} For all $A,B,C,D$,
	\begin{equation}
\tau^A_{B,C\otimes D}=\tau^{A\otimes C}_{B,D}\, \tau^A_{B,C}\otimes 1_D
	\end{equation}
\end{lemma}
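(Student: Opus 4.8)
The plan is to unfold both sides of the claimed identity into composites of the coboundary maps $\sigma$, and then to cancel one adjacent pair of arrows using the involutivity axiom $\sigma_{X,Y}\circ\sigma_{Y,X}=1_{Y\otimes X}$. No appeal to the coboundary hexagon is needed; only this second axiom, functoriality of $\otimes$, and strictness of the monoidal structure (so that $1_{C\otimes D}=1_C\otimes 1_D$) enter.

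First I would expand the right-hand side. By the definition of the local rules and functoriality of $\otimes$, the factor $\tau^A_{B,C}\otimes 1_D$ is the composite
\[
A\otimes B\otimes C\otimes D \xrightarrow{\sigma_{A,B}\otimes 1_C\otimes 1_D} B\otimes A\otimes C\otimes D \xrightarrow{\sigma_{B,A\otimes C}\otimes 1_D} A\otimes C\otimes B\otimes D,
\]
while $\tau^{A\otimes C}_{B,D}$ is the composite
\[
A\otimes C\otimes B\otimes D \xrightarrow{\sigma_{A\otimes C,B}\otimes 1_D} B\otimes A\otimes C\otimes D \xrightarrow{\sigma_{B,A\otimes C\otimes D}} A\otimes C\otimes D\otimes B.
\]
The key step is to observe that these two composites meet along the middle object $B\otimes A\otimes C\otimes D$: the last arrow of the first composite is $\sigma_{B,A\otimes C}\otimes 1_D$ and the first arrow of the second is $\sigma_{A\otimes C,B}\otimes 1_D$. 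Applying the involutivity axiom with objects $A\otimes C$ and $B$ gives $\sigma_{A\otimes C,B}\circ\sigma_{B,A\otimes C}=1_{B\otimes A\otimes C}$, so after tensoring with $1_D$ these two arrows compose to $1_{B\otimes A\otimes C\otimes D}$ and drop out.

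What remains of the right-hand side is therefore
\[
A\otimes B\otimes C\otimes D \xrightarrow{\sigma_{A,B}\otimes 1_C\otimes 1_D} B\otimes A\otimes C\otimes D \xrightarrow{\sigma_{B,A\otimes C\otimes D}} A\otimes C\otimes D\otimes B,
\]
which, reading $1_{C\otimes D}=1_C\otimes 1_D$, is exactly the definition of $\tau^A_{B,C\otimes D}$, completing the argument. I do not anticipate a genuine obstacle: the whole content is the single cancellation above. The only thing to watch is the bookkeeping of tensor factors and the implicit reassociation $A\otimes(C\otimes D)=(A\otimes C)\otimes D$ needed to read $\sigma_{B,A\otimes C\otimes D}$ consistently on both sides; recording the intermediate objects explicitly, as above, keeps the cancellation transparent. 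If one prefers the diagrammatic style of Lemma~\ref{lemma:rev}, the same proof can be packaged as a commuting rectangle whose left column is the pair of mutually inverse arrows $\sigma_{B,A\otimes C}\otimes 1_D$ and $\sigma_{A\otimes C,B}\otimes 1_D$.
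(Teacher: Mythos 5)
Your argument is correct and is essentially the paper's own proof: both expand the two local rules into their defining $\sigma$-composites and cancel the adjacent pair $\sigma_{A\otimes C,B}\otimes 1_D$ and $\sigma_{B,A\otimes C}\otimes 1_D$ via the involutivity axiom, leaving exactly the definition of $\tau^A_{B,C\otimes D}$. No difference in substance, only in presentation.
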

\begin{proof}

\begin{multline*}
\tau^{A\otimes C}_{B,D}\, \tau^A_{B,C}\otimes 1_D \\
= \big(\sigma_{B,A\otimes C\otimes D}\,
\sigma_{A\otimes C,B}\otimes 1_D\big)\,
\big(\sigma_{B,A\otimes C}\otimes 1_D\,
\sigma_{A,B}\otimes 1_{C\otimes D}\big) \\
= \sigma_{B,A\otimes C\otimes D}\,
\big(\sigma_{A\otimes C,B}\otimes 1_D\,
\sigma_{B,A\otimes C}\otimes 1_D\big)\,
\sigma_{A,B}\otimes 1_{C\otimes D} \\
= \sigma_{B,A\otimes C\otimes D}\,
\sigma_{A,B}\otimes 1_{C\otimes D}
= \tau^A_{B,C\otimes D}
\end{multline*}
\end{proof}

\begin{lemma}\label{lemma:addt} For all $A,B,C,D$,
	\begin{equation}
\tau^A_{B\otimes C,D}= \tau^A_{B,D}\otimes 1_C\, \tau^{A\otimes B}_{C,D}
	\end{equation}
\end{lemma}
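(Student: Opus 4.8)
The plan is to deduce this lemma from Lemma~\ref{lemma:addb} by inverting, rather than by unfolding the definition of $\tau$ again. By Lemma~\ref{lemma:rev} every local rule is invertible, with $(\tau^X_{Y,Z})^{-1}=\tau^X_{Z,Y}$ for all objects $X,Y,Z$. I would start from the identity of Lemma~\ref{lemma:addb}, namely $\tau^A_{B,C\otimes D}=\tau^{A\otimes C}_{B,D}\,\tau^A_{B,C}\otimes 1_D$, regarded as an isomorphism $A\otimes B\otimes C\otimes D\to A\otimes C\otimes D\otimes B$, and take the inverse of both sides.

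For the left-hand side, Lemma~\ref{lemma:rev} gives $(\tau^A_{B,C\otimes D})^{-1}=\tau^A_{C\otimes D,B}$. For the right-hand side I use that the inverse of a composite reverses the order, $(g\,f)^{-1}=f^{-1}\,g^{-1}$, together with the fact that $\otimes$ is a bifunctor, so that $(\tau^A_{B,C}\otimes 1_D)^{-1}=(\tau^A_{B,C})^{-1}\otimes 1_D=\tau^A_{C,B}\otimes 1_D$, the factor $1_D$ being its own inverse. Applying Lemma~\ref{lemma:rev} once more to the remaining factor, $(\tau^{A\otimes C}_{B,D})^{-1}=\tau^{A\otimes C}_{D,B}$, the inverted right-hand side becomes $\tau^A_{C,B}\otimes 1_D\,\tau^{A\otimes C}_{D,B}$. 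This yields the identity $\tau^A_{C\otimes D,B}=\tau^A_{C,B}\otimes 1_D\,\tau^{A\otimes C}_{D,B}$, valid for all objects $A,B,C,D$.

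Finally I would relabel the objects by the substitution $B\mapsto D$, $C\mapsto B$, $D\mapsto C$, which sends $C\otimes D$ to $B\otimes C$ and turns the displayed identity into exactly $\tau^A_{B\otimes C,D}=\tau^A_{B,D}\otimes 1_C\,\tau^{A\otimes B}_{C,D}$, as required. The computation is routine once the strategy is fixed; the only thing to watch is the bookkeeping of indices under inversion, since taking inverses simultaneously reverses the order of the composite and swaps the two lower indices of each $\tau$, and one must confirm that the contexts $A$ and $A\otimes C$ are the ones that survive unchanged.

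For completeness I would note that a direct proof is also available, parallel to that of Lemma~\ref{lemma:addb}: one expands both sides through the definitions of $\tau$ and of $\sigma_{B\otimes C,A\otimes D}$, and then collapses the middle. The main obstacle on that route is that here the simplification requires invoking the naturality of $\sigma$ in the correct tensor slot, followed by the involutivity $\sigma_{X,Y}\,\sigma_{Y,X}=1_{Y\otimes X}$, rather than the single involutive cancellation that sufficed for Lemma~\ref{lemma:addb}. Since that makes it more calculation-heavy, the inversion argument is the cleaner path.
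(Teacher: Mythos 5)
Your proof is correct and follows essentially the same route as the paper: both deduce the identity from Lemma~\ref{lemma:addb} by inverting with Lemma~\ref{lemma:rev}, the only cosmetic difference being that the paper relabels the objects before inverting while you invert first and relabel afterwards. The index bookkeeping in your inversion is accurate, so nothing further is needed.
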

\begin{proof}
By Lemma~\ref{lemma:addb} we have
\begin{equation*}
\tau^A_{D,B\otimes C} = \tau^{A\otimes B}_{D,C}\, \tau^A_{D,B}\otimes 1_C
\end{equation*}
Taking the inverse of both sides using Lemma~\ref{lemma:rev} gives the result.
\end{proof}

The next Lemma constructs the commutors $\sigma_{B,\otimes^{r+1}B}$
for $r>0$ from the local rules $\tau^{\otimes^{r-1}B}_{B,B}$.

\begin{lemma}\label{lem:lm} For each object $B$ and $r\ge 0$,
\begin{equation*}
\sigma_{B,\otimes^{r+1}B} = \tau^{\otimes^rB}_{B,B}\,\tau^{\otimes^{r-1}B}_{B,B}\dotsb\tau^{\otimes B}_{B,B}\,\tau^{I}_{B,B}
\end{equation*}
\end{lemma}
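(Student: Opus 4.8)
The plan is to prove the formula by induction on $r$, using Lemma~\ref{lemma:addb} to peel off a single tensor factor at each stage. The starting observation is that, since $\sigma_{B,C}=\tau^I_{B,C}$, the left-hand side $\sigma_{B,\otimes^{r+1}B}$ is literally $\tau^I_{B,\otimes^{r+1}B}$, so the whole statement can be phrased in terms of the local rules $\tau$ alone.

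The engine of the induction is the one-step recursion
\begin{equation*}
\sigma_{B,\otimes^{r+1}B} = \tau^{\otimes^rB}_{B,B}\,\big(\sigma_{B,\otimes^rB}\otimes 1_B\big).
\end{equation*}
To obtain it I would decompose $\otimes^{r+1}B=(\otimes^rB)\otimes B$ and apply Lemma~\ref{lemma:addb} with $A=I$, middle object $B$, and $C=\otimes^rB$, $D=B$. The superscript $A\otimes C=I\otimes\otimes^rB$ collapses to $\otimes^rB$ by strictness, the absorbed factor produces $\tau^{\otimes^rB}_{B,B}$, and $\tau^I_{B,\otimes^rB}=\sigma_{B,\otimes^rB}$ again because $\sigma=\tau^I$.

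For the base case $r=0$ the claim reads $\sigma_{B,B}=\tau^I_{B,B}$, which is exactly the identity $\sigma_{B,C}=\tau^I_{B,C}$ recalled above. For the inductive step I would substitute the hypothesis $\sigma_{B,\otimes^rB}=\tau^{\otimes^{r-1}B}_{B,B}\dotsb\tau^I_{B,B}$ into the recursion, yielding $\sigma_{B,\otimes^{r+1}B}=\tau^{\otimes^rB}_{B,B}\,\big(\tau^{\otimes^{r-1}B}_{B,B}\dotsb\tau^I_{B,B}\big)\otimes 1_B$.

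The remaining point, and the only genuinely fiddly one, is the bookkeeping of the suppressed identities: in the stated composite each factor $\tau^{\otimes^kB}_{B,B}$ really denotes $\tau^{\otimes^kB}_{B,B}\otimes 1_{\otimes^{r-k}B}$, acting on the first $k+2$ tensorands. I would invoke the interchange law (functoriality of $-\otimes 1_B$) to distribute the trailing $\otimes 1_B$ through the product, rewriting $\big(\tau^{\otimes^{r-1}B}_{B,B}\dotsb\tau^I_{B,B}\big)\otimes 1_B$ as $\big(\tau^{\otimes^{r-1}B}_{B,B}\otimes 1_B\big)\dotsb\big(\tau^I_{B,B}\otimes 1_B\big)$, and then check that, once the further implicit identities are restored, these are precisely the lower factors of the target formula while $\tau^{\otimes^rB}_{B,B}$ supplies the top one. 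Thus the main obstacle is notational rather than conceptual: one must track the implicit identity tensorands consistently so that the induction closes exactly.
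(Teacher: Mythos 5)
Your argument is correct and is essentially identical to the paper's proof: the same induction on $r$ with base case $\sigma_{B,B}=\tau^I_{B,B}$, and the same one-step recursion $\sigma_{B,\otimes^{r+1}B}=\tau^{\otimes^rB}_{B,B}\,\bigl(\sigma_{B,\otimes^rB}\otimes 1_B\bigr)$ obtained by substituting $A=I$, $C=\otimes^rB$, $D=B$ into Lemma~\ref{lemma:addb}. Your extra remarks on tracking the implicit identity tensorands only make explicit what the paper leaves tacit.
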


\begin{proof} The proof is by induction on $r$.
The base of the induction is the case $r=0$ which is the observation that $\tau^I_{B,C}=\sigma_{B,C}$.

Substitute $A=I$, $C=\otimes^rB$, $D=B$ in Lemma~\ref{lemma:addb}, \ref{lemma:addt}.
 This gives
\begin{equation*}
\sigma_{B,\otimes^{r+1}B} = \tau^{\otimes^rB}_{B,B}\, \sigma_{B,\otimes^rB}\otimes 1_B
\end{equation*}
\end{proof}

\subsection{Relations}
The $BK$-group was introduced \cite{Kirillov1995} as the group
generated by the Bender-Knuth involutions on standard tableaux.
This was then shown to be a quotient of the cactus group in \cite{Chmutov2016}.

Consider the free group generated by $\tau_i$.
Define $q_i$ by
\begin{equation*}
q_i = \tau_1(\tau_2\tau_1)\dotsb (\tau_i\tau_{i-1}\dotsc \tau_1)
\end{equation*}

\begin{prop} For $r>0$, the group $\fC_r$ is generated by $\tau_i$ for $1\le i\le r-1$
and defining relations are
\begin{align*}
\tau_i^2 &= 1 \\
\tau_i\tau_j &= \tau_j\tau_i &&\text{for $|i-j|>1$}\\
(\tau_iq_{k-1}q_{k-j}q_{k-1})^2 &= 1 &&\text{for $i+1<j<k$}
\end{align*}
\end{prop}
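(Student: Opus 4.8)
The plan is to prove this presentation by identifying it with the standard cactus presentation of Definition~\ref{defn:cactus} under an explicit change of generators. Since $\fC_r$ is generated by the elements $s_{1,q}$ for $2\le q\le r$, it suffices to match the two generating systems and show that each family of relations implies the other. The correspondence I would use is
\[
\tau_i \longleftrightarrow s_{i,\,i+1}, \qquad q_i \longleftrightarrow s_{1,\,i+1}.
\]
As a first consistency check, both $q_i$ and $s_{1,i+1}$ map to the reversal permutation $\widehat{s}_{1,i+1}$ of $\os{1,i+1}$ under $\fC_r\to\fS_r$: the product defining $q_i$ has $\binom{i+1}{2}$ factors, matching the length of the longest element of $\fS_{i+1}$, whose action is exactly reversal of $\os{1,i+1}$.

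The key computational step, which unlocks the third relation, is the identity
\[
q_{k-1}\,q_{k-j}\,q_{k-1} = s_{j,\,k}.
\]
Under the identification this reads $s_{1,k}s_{1,k-j+1}s_{1,k}=s_{j,k}$, i.e. conjugating the reversal of $\os{1,k-j+1}$ by the reversal of $\os{1,k}$ yields the reversal of $\os{j,k}$; this is the conjugation relation $s_{1,k}s_{1,m}s_{1,k}=s_{k-m+1,k}$ underlying \eqref{eqn:gen}. With this in hand the third relation $(\tau_i\,q_{k-1}q_{k-j}q_{k-1})^2=1$ becomes $(\tau_i\,s_{j,k})^2=1$, and since $\tau_i^2=s_{j,k}^2=1$ this says that $s_{i,i+1}$ commutes with $s_{j,k}$. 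The hypothesis $i+1<j<k$ forces $\os{i,i+1}\cap\os{j,k}=\emptyset$, so this is precisely an instance of the disjoint-interval commuting relation of Definition~\ref{defn:cactus}.

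With the dictionary established I would verify the two homomorphisms. For the map $\Phi$ from the presented group to $\fC_r$ sending $\tau_i\mapsto s_{i,i+1}$, the first two relations are the special cases $s_{i,i+1}^2=1$ and the disjointness of $\os{i,i+1}$ and $\os{j,j+1}$ for $|i-j|>1$, while the third is handled by the computation above; the relation $\tau_i^2=1$ also matches the coboundary interpretation $\tau_i=\tau^{\otimes^{i-1}B}_{B,B}$ through Lemma~\ref{lemma:rev}. For the map $\Psi$ in the other direction sending $s_{1,q}\mapsto q_{q-1}$, I would express every generator as $s_{p,q}=q_{q-1}q_{q-p}q_{q-1}$ (the case $k=q,\ j=p$ of the key identity) and check that the defining relations of Definition~\ref{defn:cactus} among the $s_{1,q}$ hold for the $q_{q-1}$ in the presented group. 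Finally $\Phi$ and $\Psi$ are mutually inverse on generators, since $q_{q-1}\mapsto s_{1,q}$ and the $s_{i,i+1}$ are recovered from the $q_i$.

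The main obstacle I anticipate is completeness: showing that the three families of relations on the $\tau_i$ suffice to derive the full cactus presentation for arbitrary intervals, not merely the adjacent and disjoint cases that appear most transparently. Concretely, the nesting relation $s_{p,q}s_{k,l}=s_{p+q-l,\,p+q-k}s_{p,q}$ for $\os{k,l}\subseteq\os{p,q}$ must be recovered after substituting $s_{p,q}=q_{q-1}q_{q-p}q_{q-1}$ and the analogous expression for $s_{k,l}$, and this requires careful index arithmetic together with an induction on the interval length to reduce each instance to relations among the $q_i$ already guaranteed by the constraint $i+1<j<k$. Verifying that no relation is lost in passing from the $s_{p,q}$ to the $\tau_i$ — equivalently, that $\Psi$ respects all of Definition~\ref{defn:cactus} — is where the real work lies.
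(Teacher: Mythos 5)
Your overall strategy (exhibit two explicit, mutually inverse homomorphisms and check relations in both directions) is the right shape, but it is built on an incorrect dictionary, and the error is fatal rather than cosmetic. The identification $\tau_i\leftrightarrow s_{i,\,i+1}$ is not the intended one: the change of generators recorded immediately after the proposition sends $\tau_2$ to $s_{1,\,2}s_{1,\,3}s_{1,\,2}$, which is \emph{not} equal to $s_{2,\,3}=s_{1,\,3}s_{1,\,2}s_{1,\,3}$ in $\fC_3$. Indeed, eliminating $s_{2,\,3}$ via the nesting relation shows $\fC_3=\langle s_{1,\,2},s_{1,\,3}\mid s_{1,\,2}^2=s_{1,\,3}^2=1\rangle\cong\bZ/2\ast\bZ/2$, in which $s_{1,\,2}s_{1,\,3}s_{1,\,2}\ne s_{1,\,3}s_{1,\,2}s_{1,\,3}$ and, worse, $\langle s_{1,\,2},s_{2,\,3}\rangle$ is a proper (index two) subgroup. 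So your map $\Phi\colon\tau_i\mapsto s_{i,\,i+1}$ is not even surjective, and $\Phi(q_2)=s_{1,\,2}s_{2,\,3}s_{1,\,2}\ne s_{1,\,3}$, so $\Phi$ and $\Psi$ are not mutually inverse on generators as you claim. Your consistency check only compares images under $\fC_r\to\fS_r$, and the two candidate elements differ by an element of the (large) kernel of that map, so the check cannot detect the problem; relatedly, the paper notes that the $s_{p,\,p+1}$ act trivially on standard tableaux, while the $\tau_i$ are meant to realise Bender--Knuth-type involutions, which do not.

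The correct dictionary is the one the paper states: $\tau_i\mapsto s_{1,\,i}s_{1,\,i+1}s_{1,\,i}s_{1,\,i-1}$ for $i>2$ (with the degenerate cases for $i=1,2$), equivalently $\tau_i=\sigma_{1,1,i+1}\,\sigma_{1,1,i}^{-1}$, which is forced by Lemma~\ref{lem:lm} once one takes $\tau_i$ to be the local move $\tau^{\otimes^{i-1}B}_{B,B}$. With this choice the products defining $q_i$ telescope and one genuinely has $q_i=s_{1,\,i+1}$ in $\fC_r$; your key identity $q_{k-1}q_{k-j}q_{k-1}=s_{j,\,k}$ and the assignment $s_{i,\,j}\mapsto q_{j-1}q_{j-i}q_{j-1}$ are then correct, but all the relation-checking has to be redone from this starting point (for instance, the third relation now asserts that the length-four word $s_{1,\,i}s_{1,\,i+1}s_{1,\,i}s_{1,\,i-1}$ commutes with $s_{j,\,k}$, which follows from disjointness of $\os{1,i+1}$ and $\os{j,k}$, not of $\os{i,i+1}$ and $\os{j,k}$). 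The completeness question you rightly flag as the real work remains open in your write-up; the paper itself offers no proof beyond stating the inverse maps, deferring in substance to \cite{Chmutov2016}, so that part would indeed need to be supplied.
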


The inverse isomorphisms are defined on the generators as follows.
In one direction
\begin{equation*}
s_{i,\,j} \mapsto q_{j-1}q_{j-i}q_{j-1}
\end{equation*}
and in the other direction by
\begin{equation*}
\tau_i \mapsto \begin{cases}
s_{1,\,2} & \text{if $i=1$} \\
s_{1,\,2}s_{1,\,3}s_{1,\,2} & \text{if $i=2$} \\
s_{1,\,i}s_{1,\,i+1}s_{1,\,i}s_{1,\,i-1} & \text{if $i>2$}
\end{cases}
\end{equation*}

It is shown in \cite{Chmutov2016} that the BK-groups are proper quotients of the
cactus groups. However there is no known presentation of the BK groups.
We conjecture that the $BK$-groups are the groups in \cite{Krammer2008a}. These are the quotients of the cactus group under the six term relations.

\section{Quantum groups}\label{sec:qg}
The aim of this section is to compute $\tau^{\otimes^n V}_{V,V}$
for $V$ a highest weight representation of a quantised enveloping
algebra.
This calculation is based on Drinfeld unitarisation
\begin{equation}\label{eq:unit}
\sigma_{V,W} = R_{V,W}(R_{W,V}R_{V,W})^{-1/2}
\end{equation}
where $R$ is the $R$-matrix which gives the braiding.

\begin{defn}
	The \Dfn{$r$-string braid group}, $\fB_r$, is generated by $t_i$ for $1\le i\le r-1$
	and the defining relations are
	\begin{align*}
	t_i\,t_{i+1}\,t_i&=t_{i+1}\,t_i\,t_{i+1} \\
	t_i\,t_j&=t_j\,t_i\qquad\text{for $|i-j|>1$}
	\end{align*}
\end{defn}

\begin{defn} For $1\leq n\le r$ the \Dfn{Jucys-Murphy element} $J_n\in\fB_r$
	is defined by
	\begin{equation*}
	J_n = (t_n\dotsc t_2t_1)(t_1t_2\dotsc t_n)
	\end{equation*}
\end{defn}

\begin{thm}\label{thm:qg} For $n>0$,
	\begin{equation*}
	\tau_n = J_{n-1}^{1/2}\,t_n\,J_n^{-1/2}
	\end{equation*}
\end{thm}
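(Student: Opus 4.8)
The plan is to compute $\tau_n=\tau^{\otimes^{n-1}V}_{V,V}$ (the local rule acting on $\otimes^{n+1}V$) by first reducing it to commutors and then feeding each commutor through the unitarisation formula \eqref{eq:unit}. By the inductive identity established in the proof of Lemma~\ref{lem:lm}, namely $\sigma_{V,\otimes^n V}=\tau_n\,(\sigma_{V,\otimes^{n-1}V}\otimes 1_V)$, one has
\[
\tau_n=\sigma_{V,\otimes^n V}\,(\sigma_{V,\otimes^{n-1}V}\otimes 1_V)^{-1},
\]
so it suffices to find a closed form for the commutor $\sigma_{V,\otimes^k V}$ of a single tensorand with a block.

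First I would record the closed form $\sigma_{V,\otimes^k V}=R_{V,\otimes^k V}\,J_k^{-1/2}$. This follows from \eqref{eq:unit} applied with $W=\otimes^k V$ once we identify the double braiding $R_{\otimes^k V,V}R_{V,\otimes^k V}$ with the Jucys--Murphy braid $J_k$: moving one strand once around the block of the remaining $k$ strands and back is exactly the braid $(t_k\dotsb t_1)(t_1\dotsb t_k)$, while the bare braiding $R_{V,\otimes^k V}$ is the half-twist $t_k\dotsb t_1$ carrying one strand across the block. A convention for reading braids and ordering tensor factors must be fixed once and for all so that the double braiding is literally $J_k$ rather than a conjugate of it; this bookkeeping is routine but must be done with care.

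The heart of the argument is then the algebraic simplification of
\[
\tau_n=R_{V,\otimes^n V}\,J_n^{-1/2}\,J_{n-1}^{1/2}\,(R_{V,\otimes^{n-1}V})^{-1},
\]
where $R_{V,\otimes^{n-1}V}$ is read as an element of $\fB_{n+1}$ fixing the last strand. I would use three structural facts. First, the telescoping $R_{V,\otimes^n V}=R_{V,\otimes^{n-1}V}\,t_n$, which peels off the last crossing. Second, the commuting-family property of the Jucys--Murphy braids: the products $J_1\dotsb J_k=\Delta_{k+1}^2$ are the full twists, which are central in the corresponding braid subgroups, so the $J_k$ pairwise commute, each $J_k$ commutes with $\fB_k=\langle t_1,\dotsc,t_{k-1}\rangle$, and these commutation relations pass to the fractional powers $J_k^{\pm 1/2}$. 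Third, the recursion $J_n=t_n\,J_{n-1}\,t_n$. Combining these, the factors $(R_{V,\otimes^{n-1}V})^{\pm1}$ cancel and the expression collapses to $J_{n-1}^{1/2}\,t_n\,J_n^{-1/2}$.

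The main obstacle is the rigorous treatment of the fractional powers. The square roots $J_k^{1/2}$ live not in $\fB_r$ itself but in a completion: either the $\hbar$-adic completion, where $J_k=1+O(\hbar)$ and $(\cdot)^{-1/2}$ is the convergent binomial series, or the endomorphism algebra $\End(\otimes^{r}V)$, where $J_k$ is positive and $J_k^{1/2}$ is its positive square root via functional calculus. In either model I must check that \eqref{eq:unit} is well defined, that the commutation relations of the $J_k$ genuinely descend to the $J_k^{1/2}$ (so that functions of a commuting family may be freely reordered), and that conjugation by braid elements commutes with extraction of square roots. Once the powers are under control the collapse is forced; but because the positive square roots do not obey the naive braid identities (for instance $(t_nJ_{n-1}t_n)^{1/2}\neq t_nJ_{n-1}^{1/2}t_n$), the final step genuinely intertwines the braid relations with the functional calculus, and it is there that the care is required.
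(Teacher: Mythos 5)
Your overall strategy is the paper's: reduce $\tau_n$ to a ratio of two commutors via the recursion $\sigma_{V,\otimes^n V}=\tau_n\circ(\sigma_{V,\otimes^{n-1}V}\otimes 1_V)$ from Lemma~\ref{lem:lm}, compute each commutor by unitarisation \eqref{eq:unit} with the double braiding identified as a Jucys--Murphy element (this is \eqref{eq:ct}), and cancel. The paper's proof is exactly this three-line telescoping.

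There is, however, a genuine gap in your final step, and it sits precisely in the ``bookkeeping'' you defer as routine. The element $J_k=(t_k\dotsb t_1)(t_1\dotsb t_k)$ is the pure braid wrapping the \emph{last} strand around the first $k$; it commutes with $\fB_k$ and it is the double braiding acting on the \emph{codomain} $\otimes^kV\otimes V$ of $\sigma_{V,\otimes^kV}$. The double braiding on the domain $V\otimes(\otimes^kV)$ is the conjugate $(t_1\dotsb t_k)(t_k\dotsb t_1)$, which commutes with $t_2,\dotsc,t_k$ instead. Hence the unitarised commutor in which $J_k$ literally appears is $J_k^{-1/2}\circ R_{V,\otimes^kV}$, with the fractional power applied \emph{after} the braiding. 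You place it on the other side, so that in your expression for $\tau_n$ the factor $J_n^{-1/2}J_{n-1}^{1/2}$ sits in the middle and the two braiding factors sit on the outside. The cancellation of $R_{V,\otimes^{n-1}V}^{\pm1}$ you then assert requires moving $J_{n-1}^{1/2}$ past $t_{n-1}$ and $t_n$; your three structural facts do not permit this ($J_{n-1}$ commutes only with $t_1,\dotsc,t_{n-2}$), and the needed identity is in fact false. For $n=2$ it reduces to $t_1t_2(t_1^2)^{1/2}t_1^{-1}=(t_1^2)^{1/2}t_2$, i.e.\ to the assertion that $(t_1^2)^{-1/2}t_1$ --- the nontrivial involution $\sigma_{V,V}$ of Lemma~\ref{ex:VV} --- commutes with $t_2$, which fails already in the seminormal representation on shape $(2,1)$.

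With the correct placement the two braiding factors become adjacent in $\tau_n=\sigma_{V,\otimes^nV}\circ(\sigma_{V,\otimes^{n-1}V}\otimes 1_V)^{-1}$ and cancel to $t_n$ by pure group theory, leaving $J_n^{-1/2}\circ t_n\circ J_{n-1}^{1/2}$, which is the stated formula read with $J_{n-1}^{1/2}$ applied first. No commutation of fractional powers past braid generators is needed, and the delicate ``intertwining of braid relations with functional calculus'' you anticipate never arises; the only functional-calculus input is that conjugation commutes with taking square roots. Your discussion of where the square roots live (an $\hbar$-adic completion or positive operators on $\End(\otimes^rV)$) is correct and is a point the paper glosses over.
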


\begin{proof}
Put $W=\otimes^nV$. Then we have
\begin{equation*}
R_{V,W} = t_1t_2\dotsc t_n \text{ and } R_{V,W} = t_n\dotsc t_2t_1
\end{equation*}
Then, by Drinfeld unitarisation,
\begin{equation}\label{eq:ct}
\sigma_{1,\,1,\,n} = (t_1t_2\dotsb t_n)J_n^{-1/2}
\end{equation}
Then, by Lemma~\ref{lem:lm}, the local rule satisfies
\begin{equation*}
\sigma_{1,\,1,\,n} = \sigma_{1,\,1,\,n-1}\,\tau_n
\end{equation*}
Substituting from \eqref{eq:ct} gives
\begin{align*}
\tau_n &= \sigma_{1,\,1,\,n-1}^{-1}\,\sigma_{1,\,1,\,n} \\
&= J_{n-1}^{1/2}(t_1t_2\dotsb t_{n-1})^{-1}(t_1t_2\dotsb t_n)J_n^{-1/2} \\
&= J_{n-1}^{1/2}\,t_n\,J_n^{-1/2}
\end{align*}
\end{proof}

\subsection{Hecke algebras}
For each $r\ge 0$ there is a Hecke algebra $H_r(q)$. 
This is an algebra over the field of rational functions, $\bQ(q)$.

The quantum integers are the Laurent polynomials given by
\begin{equation*}
[n] = \frac{q^n-q^{-n}}{q-q^{-1}}
\end{equation*}
For example, $[2]=q+q^{-1}$.
\begin{defn}
	The Hecke algebra $H_r(q)$ is generated by $u_i$ for $1\le i\le r-1$
	and the defining relations are
	\begin{align*}
	u_i^2 &= -[2]\, u_i \\
	u_i\,u_{i+1}\,u_i-u_i&=u_{i+1}\,u_i\,u_{i+1}-u_{i+1} \\
	u_i\,u_j&=u_j\,u_i\qquad\text{for $|i-j|>1$}
	\end{align*}
\end{defn}

The algebra homomorphisms $\bQ(q)\,\fB_r\to H_r(q)$ are given by
\begin{equation*}
t_i^{\pm 1}\mapsto q^{\pm 1} + u_i	
\end{equation*}

Let $U_q(N)$ be the quantised enveloping algebra of type $A_N$.
Let $V$ be the vector representation; this is a fundamental
representation of dimension $N+1$. Then quantum Schur-Weyl
duality, \cite{MR841713}, is the statement that we have surjective homomorphisms
\begin{equation*}
H_r(q) \to \End_{U_q(N)}(\otimes^r V)
\end{equation*}
These are isomorphisms for $r\le \dim V$.

We also have homomorphisms $\fC_r\to \End_U(\otimes^r V)$
since $U_q(N)$-mod is a coboundary category. These homomorphisms
can be lifted to a homomorphism $\fC_r\to H_r(q)$ which is
independent of $N$.

\begin{lemma}\label{ex:VV}
	The cactus commutor $\sigma_{V,V}$ is the involution
	\begin{equation*}
	t_1(t_1^2)^{-1/2} = 1+\frac2{[2]}u_1	
	\end{equation*}
\end{lemma}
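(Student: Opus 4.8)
The plan is to establish the two asserted equalities in turn. For the first, $\sigma_{V,V}=t_1(t_1^2)^{-1/2}$, I would specialise Drinfeld unitarisation \eqref{eq:unit} to $W=V$. As recorded in the proof of Theorem~\ref{thm:qg} with $n=1$, the $R$-matrix on $V\otimes V$ is the braid generator $t_1$, so setting $W=V$ in \eqref{eq:unit} turns both $R_{V,W}$ and $R_{W,V}$ into $t_1$ and gives $\sigma_{V,V}=t_1(t_1\cdot t_1)^{-1/2}=t_1(t_1^2)^{-1/2}$. Equivalently, this is the $n=1$ instance of \eqref{eq:ct}, since $J_1=(t_1)(t_1)=t_1^2$ yields $\sigma_{1,1,1}=t_1J_1^{-1/2}$.

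For the second equality I would pass to the Hecke algebra $H_r(q)$ along the homomorphism $t_1\mapsto q+u_1$ and compute everything in terms of the single generator $u_1$. The structural input is that $u_1$ satisfies $u_1^2=-[2]\,u_1$, hence $u_1(u_1+[2])=0$, so $u_1$ has exactly the two eigenvalues $0$ and $-[2]$. I would introduce the complementary idempotents $e_-=1+\frac1{[2]}u_1$ and $e_+=-\frac1{[2]}u_1$, and check, in one line each using $u_1^2=-[2]\,u_1$, that $e_++e_-=1$, that $e_\pm^2=e_\pm$, and that $e_+e_-=e_-e_+=0$.

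With this spectral decomposition in hand the rest is a direct computation. On the image of $e_-$ one has $u_1=0$, so $t_1=q+u_1$ acts as $q$; on the image of $e_+$ one has $u_1=-[2]$, so $t_1$ acts as $q-[2]=-q^{-1}$. Thus $t_1=q\,e_--q^{-1}e_+$, whence $t_1^2=q^2e_-+q^{-2}e_+$ and $t_1(t_1^2)^{-1/2}$ acts as $+1$ on the image of $e_-$ and as $-1$ on the image of $e_+$, giving $\sigma_{V,V}=e_--e_+$. Substituting the formulas for $e_\pm$ yields $e_--e_+=1+\frac2{[2]}u_1$, which is the claimed expression, and orthogonality of the idempotents gives $(e_--e_+)^2=e_-+e_+=1$, confirming that $\sigma_{V,V}$ is an involution.

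The one point I would treat with care is the choice of branch for $(t_1^2)^{-1/2}$. The spectral calculus is unambiguous only once I fix $(q^2)^{1/2}=q$ and $(q^{-2})^{1/2}=q^{-1}$, and I would verify that this is the branch consistent with the normalisation in \eqref{eq:ct}, so that the resulting $\sigma_{V,V}$ is the self-adjoint involution produced by Drinfeld's construction rather than some other square root of the identity. I expect this bookkeeping of signs and square roots, rather than the algebra itself, to be the main obstacle.
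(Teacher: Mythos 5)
Your proposal is correct and follows essentially the same route as the paper: both arguments rest on the spectral decomposition of $t_1^2$ with respect to the orthogonal idempotents $1+\frac1{[2]}u_1$ and $-\frac1{[2]}u_1$ coming from $u_1^2=-[2]\,u_1$, together with the branch choice $(q^{\pm 2})^{-1/2}=q^{\mp 1}$. Your additional checks (the idempotent identities, the verification that the result squares to the identity, and the explicit reduction of $\sigma_{V,V}=t_1(t_1^2)^{-1/2}$ to the $W=V$ case of Drinfeld unitarisation) are consistent with, and slightly more detailed than, what the paper records.
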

\begin{proof}
	We have
	\begin{equation*}
	t_1=q+u_1\qquad t_1^2=q^2+(q-q^{-1})u_1
	\end{equation*}
	The spectral decomposition of $t_1^2$ is
	\begin{equation*}
	t_1^2= q^2\left(1+\frac1{[2]}u_1\right)+q^{-2}\left(-\frac1{[2]}u_1 \right)
	\end{equation*}
	Hence
	\begin{equation*}
	(t_1^2)^{-1/2}= q^{-1}\left(1+\frac1{[2]}u_1\right)+q\left(-\frac1{[2]}u_1 \right)
	= q^{-1}-\left(\frac{q-q^{-1}}{q+q^{-1}} \right)u_1
	\end{equation*}
	Note that $t_1^{-1}=q^{-1}+u_1$, so $(t_1^2)^{-1/2}\ne t_1^{-1}$.
	
	Then multiplying by $t_1=q+u_1$ gives the result.
\end{proof}
This shows that the homomorphism $\bQ(q)\,\fC_r\to H_r(q)$ is surjective; answering
\cite[\S~10,~Question~2]{Kamnitzer2009a}.

\subsection{Seminormal form}
Young's seminormal forms are representations of the symmetric groups.
These were introduced in \cite[Theorem IV]{MR1576146}. Here we give
the analogous construction for the Hecke algebras following
\cite{MR1988991} and \cite{MR1976700}.

Fix a shape $\lambda$ of size $r$. Then we construct a 
representation of $H_r(q)$. The representation has basis the set
of standard tableaux of shape $\lambda$. For a standard tableau, $T$, let $s_iT$ be the
tableau obtained from $T$ by interchanging $i$ and $i+1$.
If $s_iT$ is not standard then put $s_iT=0$.

The content vector of a standard tableau $T$ of size $n$ is a function
$c_T\colon [1,2,\dotsc ,n]\to\bZ$. The entry
$c_T(k)\in\bZ$ is given by $c_T(k)=j-i$ if $k$ is in box $(i,j)$ in $T$.
The \Dfn{content vector} of $T$ is the sequence $[c_T(1),\dotsc ,c_T(r)]$.
The content vector of $T$ determines $T$. The \Dfn{axial distance}
is $a_T(i) = c_{T}(i+1)-c_{T}(i)$.

The following is \cite[Theorem~3.22]{MR1444315}.
\begin{defn}
	Then we define the action of $u_i$ by
	\begin{equation*} u_i\:T = 
	\begin{cases}
	-\frac{[a-1]}{[a]}\:T + s_iT &\text{if $c_T(i+1)>c_T(i)$}\\
	-\frac{[a-1]}{[a]}\:T + \frac{[a-1][a+1]}{[a]^2}\:s_iT &\text{if $c_T(i)>c_T(i+1)$}	
	\end{cases}
	\end{equation*}
where $a$ is the axial distance $a_T(i)$.
\end{defn}

If $T$ is standard and $s_iT$ is not standard then $a=\pm 1$.
Then $u_i\:T=-[2]T$ if $a=1$ and $u_i\:T=0$ if $a=-1$.
Assume $T$ and $s_iT$ are standard and $a>1$. Then on the subspace
with ordered basis $(T,s_iT)$.
\begin{equation*}
u_i =	\left[ \begin {array}{cc} -\frac {[a-1]}{ [a]  }&1
\\ \noalign{\medskip}{\frac {[a-1][a+1]  }{
		[a+1] ^{2}}}&-\frac {[a+1]}{ [a]  }\end {array} \right]
	\qquad
t_i=\left[ \begin {array}{cc} \frac {q^a}{ [a]  }&1
\\ \noalign{\medskip}{\frac {[a-1][a+1]  }{
		[a+1] ^{2}}}&\frac {-q^{-a}}{ [a]  }\end {array} \right]
\end{equation*}
Then 
The Jucys-Murphy elements are represented by diagonal matrices
in seminormal representations.

This is based on the result, see \cite{MR1427801}.
\begin{prop}\label{prop:jm}
	\begin{equation*}
	J_i\:T = q^{2c_i(T)}\:T
	\end{equation*}
\end{prop}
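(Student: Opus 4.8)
The plan is an induction on $n$ driven by the telescoping identity
\begin{equation*}
J_n = t_n\,J_{n-1}\,t_n,
\end{equation*}
which follows at once from the definition $J_n=(t_n\cdots t_1)(t_1\cdots t_n)$ and passes to $H_r(q)$ under $t_i\mapsto q+u_i$. The base case is $J_0=1$ (the empty product), which acts as $q^{2c_T(1)}=q^0=1$ since box $1$ has content $0$; equivalently one can start from $J_1=t_1^2$ and read the eigenvalues $q^{\pm2}$ off the spectral decomposition of $t_1^2$ used in Lemma~\ref{ex:VV}, checking on the two standard tableaux of size $2$ that they record the content of box $2$.

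For the inductive step fix a standard $T$, put $\alpha=c_T(n)$, $\beta=c_T(n+1)$ and $a=a_T(n)=\beta-\alpha$. Since $t_n$ couples $T$ only with $s_nT$, the operator $t_nJ_{n-1}t_n$ preserves $\langle T,s_nT\rangle$, so it is enough to diagonalise there. When $s_nT$ is standard the inductive hypothesis gives $J_{n-1}=\mathrm{diag}(q^{2\alpha},q^{2\beta})$ in the ordered basis $(T,s_nT)$, using that interchanging $n$ and $n+1$ carries the box of $n$ to the box of $n+1$, so $c_{s_nT}(n)=c_T(n+1)=\beta$. I would then insert the explicit $2\times2$ matrix for $t_n$ displayed above and compute $t_n\,\mathrm{diag}(q^{2\alpha},q^{2\beta})\,t_n$ directly, the target being $\mathrm{diag}(q^{2\beta},q^{2\alpha})$, that is $J_nT=q^{2\beta}T=q^{2c_T(n+1)}T$. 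When $s_nT$ is not standard one has $a=\pm1$ and $T$ is already a $t_n$-eigenvector; specialising the generic formulas gives $t_nT=qT$ for $a=1$ and $t_nT=-q^{-1}T$ for $a=-1$, whence $J_nT=t_n^2\,q^{2\alpha}T=q^{2(\alpha\pm1)}T=q^{2\beta}T$, as $\beta=\alpha\pm1$ in these two cases.

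The heart of the matter is the $2\times2$ computation, and it splits cleanly into two observations. The off-diagonal entries of $t_n\,\mathrm{diag}(q^{2\alpha},q^{2\beta})\,t_n$ vanish automatically: they are proportional to $q^{a}q^{2\alpha}-q^{-a}q^{2\beta}$, which is zero precisely because the two $J_{n-1}$-eigenvalues stand in the ratio $q^{2a}$. This is where diagonality of $J_n$ in the tableau basis is genuinely established. The substantive point is that the diagonal entries come out to $q^{2\beta}$ and $q^{2\alpha}$, and this is exactly where the quantum-integer identity $[a+1][a-1]=[a]^2-1$ is needed; I would also pause here to reconcile two apparent typos in the displayed formulas, namely that the off-diagonal denominator should read $[a]^2$ rather than $[a+1]^2$ for the cancellation to succeed, and that the one-line degenerate rule as printed interchanges the cases $a=1$ and $a=-1$ relative to the generic formula it specialises.

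Finally I would record the one bookkeeping caveat. With the normalisation $J_n=(t_n\cdots t_1)(t_1\cdots t_n)$ the induction yields $q^{2c_T(n+1)}$, reflecting that this $J_n$ is attached to the $(n+1)$-st tensor factor exactly as in the computation of $\sigma_{1,1,n}$ in \eqref{eq:ct}; the content label $c_i(T)$ in the statement is to be read with this shift. A more structural alternative would bypass the explicit matrices by proving that the $J_m$ pairwise commute and that the seminormal representations restrict multiplicity-freely along $H_1\subset\cdots\subset H_r$, forcing simultaneous diagonalisability in the tableau basis; the eigenvalue would then be pinned down by evaluating only the $(1,1)$-entry of $t_nJ_{n-1}t_n$. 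I prefer the direct route, since the matrices above make it self-contained.
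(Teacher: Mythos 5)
Your proof is correct, but there is nothing in the paper to compare it against: the paper offers no argument for Proposition~\ref{prop:jm} at all, merely the remark ``This is based on the result, see \cite{MR1427801}.'' Your inductive verification via the telescoping identity $J_n=t_nJ_{n-1}t_n$ is the standard route taken in that literature, and you execute it correctly: the off-diagonal entries of $t_n\,\mathrm{diag}(q^{2\alpha},q^{2\beta})\,t_n$ vanish because $\tfrac{q^a}{[a]}q^{2\alpha}-\tfrac{q^{-a}}{[a]}q^{2\beta}=0$ when $\beta-\alpha=a$, and the diagonal entries equal $q^{2\beta},q^{2\alpha}$ by $[a-1][a+1]=[a]^2-1$; the degenerate cases $a=\pm1$ check out against the generic formula. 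The three editorial points you raise are all genuine and worth recording: the denominator $[a+1]^2$ in the displayed matrices must be $[a]^2$ (as in the definition of the $u_i$-action, and as required for the diagonal entries to close up); the printed degenerate rule has the cases $a=1$ and $a=-1$ interchanged (the generic formula gives $u_iT=0$ for $a=1$ and $u_iT=-[2]T$ for $a=-1$, which is also what forces $J_1T=q^{2}T$ on the row tableau); and the eigenvalue of $J_n$ is $q^{2c_T(n+1)}$, an index shift that is in fact the convention the paper silently uses in the subsequent check $\mathrm{diag}(q^r,q^{-s})\,\tau_i=t_i\,\mathrm{diag}(q^{-s},q^r)$ with $r+s=a$, where $r=-c_T(i)$ and $s=c_T(i+1)$. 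What your approach buys over the paper's bare citation is a self-contained verification that simultaneously certifies the correctness of the surrounding displayed matrices (modulo the typos you identify), at the modest cost of an explicit $2\times2$ computation.
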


\begin{thm} The action of $\tau_i$ is given by
	\begin{equation*} \tau_iT = 
	\begin{cases}
	-\frac{1}{[a]}\:T + s_iT &\text{if $c_T(i+1)>c_T(i)$}\\
	-\frac{1}{[a]}\:T + \frac{[a-1][a+1]}{[a]^2}\:s_iT &\text{if $c_T(i)>c_T(i+1)$}	
	\end{cases}
	\end{equation*}
\end{thm}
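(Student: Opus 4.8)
The plan is to substitute the two structural results already in hand — the factorisation $\tau_i = J_{i-1}^{1/2}\,t_i\,J_i^{-1/2}$ of Theorem~\ref{thm:qg} and the diagonalisation of the Jucys--Murphy elements in Proposition~\ref{prop:jm} — and then carry out an explicit $2\times 2$ computation in the seminormal basis. First I would fix a standard tableau $T$, set $a=a_T(i)=c_T(i+1)-c_T(i)$, and assume for the moment that $s_iT$ is also standard. Since $J_{i-1}$ and $J_i$ act diagonally on the seminormal basis, both $J_{i-1}^{1/2}$ and $J_i^{-1/2}$ are diagonal once a branch of the square root is fixed, and all three of $t_i$, $J_{i-1}^{1/2}$, $J_i^{-1/2}$ preserve the subspace $\langle T, s_iT\rangle$. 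Thus $\tau_i$ restricts to this subspace as the product of the explicit matrix for $t_i$ with two diagonal matrices, and it suffices to read off the image of $T$.

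The key structural observation, which I would isolate next, is that multiplying the seminormal matrix of $t_i$ on the left and right by these diagonal factors leaves the coefficient of $s_iT$ in $\tau_iT$ unchanged while multiplying the coefficient of $T$ by $-q^{-a}$. The first point holds because $J_{i-1}$ evaluated on $s_iT$ and $J_i$ evaluated on $T$ both read the content of the same box, namely the box of $T$ containing $i+1$, so the two square-root contributions cancel; this is precisely why the off-diagonal entries $1$ and $\tfrac{[a-1][a+1]}{[a]^2}$ appearing in the action of $u_i$ (equivalently of $t_i=q+u_i$) reappear unaltered in $\tau_i$. The second point is the content bookkeeping $c_T(i)-c_T(i+1)=-a$, which converts the diagonal entry $\tfrac{q^a}{[a]}$ of $t_i$ into $-\tfrac{1}{[a]}$. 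Combining these with the two cases of the seminormal action then yields the stated formula both when $c_T(i+1)>c_T(i)$ and when $c_T(i)>c_T(i+1)$.

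Finally I would dispose of the degenerate cases in which $s_iT$ is not standard, so that $a=\pm 1$ and $\langle T,s_iT\rangle$ is one-dimensional: here $t_iT=-q^{-1}T$ when $a=1$ and $t_iT=qT$ when $a=-1$, the Jucys--Murphy factors supply the compensating scalar, and the outcome $\tau_iT=-\tfrac{1}{[a]}T=\mp T$ agrees with the general formula once the $s_iT$-term is dropped, matching the computation of $\sigma_{V,V}$ in Lemma~\ref{ex:VV}. I expect the main obstacle to be the careful choice and tracking of the branch of the square roots $J_{i-1}^{1/2}$ and $J_i^{-1/2}$: it is this choice that must produce exactly the factor $-q^{-a}$ on the diagonal, rather than a spurious power of $q$, and keeping it consistent across the generic and degenerate cases — compatibly with the normalisation forced by $\sigma_{V,V}=t_1J_1^{-1/2}$ — is the delicate part of the argument.
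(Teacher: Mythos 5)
Your strategy coincides with the paper's: both arguments reduce the claim to the identity $\tau_i=J_{i-1}^{1/2}\,t_i\,J_i^{-1/2}$ of Theorem~\ref{thm:qg}, use Proposition~\ref{prop:jm} to diagonalise the Jucys--Murphy factors, and then check a $2\times 2$ identity on each block (the paper verifies the equivalent relation $J_i^{1/2}\tau_i=t_i\,J_{i-1}^{1/2}$ in the form of the displayed matrix identity with $r+s=a$). Your observation that the off-diagonal coefficients survive unchanged because $J_{i-1}$ on $s_iT$ and $J_i$ on $T$ both read the content of the box of $T$ containing $i+1$ is correct, and is exactly the mechanism that makes the computation close.

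The gap is the asserted diagonal factor $-q^{-a}$. The content bookkeeping you invoke yields $q^{c_T(i)-c_T(i+1)}=q^{-a}$ with no minus sign, so the coefficient of $T$ in $\tau_iT$ comes out as $q^{-a}\cdot\tfrac{q^a}{[a]}=+\tfrac{1}{[a]}$, not $-\tfrac{1}{[a]}$. No choice of branch of the square roots can manufacture the missing sign while keeping the off-diagonal entry equal to $1$: the sign chosen for $J_i^{-1/2}$ on $T$ multiplies both coefficients of $t_i\,J_i^{-1/2}T$ simultaneously, the branch of $J_{i-1}^{1/2}$ on $s_iT$ is then forced by the requirement that the off-diagonal entry be $+1$, and the branch of $J_{i-1}^{1/2}$ on $T$ is pinned down by the positive square root implicit in Drinfeld unitarisation \eqref{eq:unit} and computed explicitly for $(t_1^2)^{-1/2}$ in Lemma~\ref{ex:VV}. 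Your degenerate-case check, done carefully, refutes rather than confirms the stated sign (note also that you have the two cases interchanged: $a=1$ is the case $u_iT=0$, $t_iT=qT$): on the one-dimensional block with $a=1$, Lemma~\ref{ex:VV} gives $\tau_1T=\sigma_{V,V}T=+T$, whereas the stated formula gives $-\tfrac{1}{[1]}T=-T$. What your computation actually establishes is the $2\times2$ matrix displayed immediately after the theorem, whose $(1,1)$ entry is $+\tfrac{1}{[a]}$; that matrix is precisely what the paper's verification identity proves, and it disagrees with the sign in the theorem's case formula. You should therefore either treat the sign in the statement as a typographical error and prove the corrected formula $\tau_iT=\tfrac{1}{[a]}T+\dotsb$, or acknowledge that the statement as printed cannot be obtained by this argument.
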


Assume $T$ and $s_iT$ are standard and $a_T(i)>1$. Then on the subspace
with ordered basis $(T,s_iT)$.
\begin{equation*}
\tau_i=\left[ \begin {array}{cc} \frac {1}{ [a]  }&1
\\ \noalign{\medskip}{\frac {[a-1][a+1]  }{
		[a+1] ^{2}}}&-\frac {1}{ [a]  }\end {array} \right]
\end{equation*}

\begin{proof}
It is sufficient to check the relation in Theorem~\ref{thm:qg}.

By Proposition~\ref{prop:jm},
	\begin{equation*}
	J_i^{\pm 1/2}\:T = q^{\pm c_i(T)}\:T
	\end{equation*}
	
	All the matrices are simultaneously block diagonal.
	Therefore it is sufficient to check this on each block.
	The check for the $2\times 2$ blocks is the identity that, for $r+s=a$,
	\begin{equation*}
	\left[ \begin {array}{cc} q^r&0
	\\ \noalign{\medskip}0&q^{-s}\end {array} \right] 	
	\left[ \begin {array}{cc} \frac {1}{ [a]  }&1
	\\ \noalign{\medskip}{\frac {[a-1][a+1]  }{
			[a+1] ^{2}}}&-\frac {1}{ [a]  }\end {array} \right]
	=
	\left[ \begin {array}{cc} \frac {q^a}{ [a]  }&1
	\\ \noalign{\medskip}{\frac {[a-1][a+1]  }{
			[a+1] ^{2}}}&\frac {-q^{-a}}{ [a]  }\end {array} \right]
	\left[ \begin {array}{cc} q^{-s}&0
	\\ \noalign{\medskip}0&q^{r}\end {array} \right] 	
	\end{equation*}
\end{proof}

\section{Crystals}\label{sec:crystal}
For each finite type Cartan matrix there is a monoidal category of finite crystals. These were shown to be coboundary categories
in~\cite{Henriques2006}. In this section we describe the local rules.
\subsection{Crystals}
In this section we give a summary of the basic theory of crystals.
This is based on \cite[Chapter~5]{Joseph1995}.

If $X$ is a set then $X_\ast$ is the pointed set $X\coprod \{0\}$.
A partial function $X\nrightarrow Y$ can be identified with a map of pointed sets $X_\ast\to Y_\ast$.

The minimal data for a \Dfn{normal crystal} is a finite set $B$ together with partial functions $e_i\colon B\nrightarrow B$ for $i\in I$.
Each $e_i$ is injective and nilpotent.

A \Dfn{morphism} $B\to B'$ is a partial function
$F\colon B_\ast\nrightarrow B'_\ast$ such that $F\circ e_i=e'_i \circ F$
for $i\in I$.

The minimal data for a crystal is usually presented as a directed graph with vertex set $B$
and edges labelled by $I$. There is an $i$-edge from $x\to y$
if and only if $e_i\, x = y$. The directed graph of a crystal has no oriented cycles.

The minimal data we have presented determines additional data.
In most presentations of the theory of crystals this data is 
included in the definition.

The first additional data we add are the partial functions $f_i\colon B\nrightarrow B$ for $i\in I$.
These are defined by $f_i\,x=y$ if and only if
$e_i\,y=x$. These are also injective and nilpotent.

Now we define functions $\varepsilon_i,\varphi_i\colon B\to\bN$ by
\begin{align*}
	\varepsilon_i(x) &= \max \{k|e_i^k\ne 0\} \\
	\varphi_i(x) &= \max \{k|f_i^k\ne 0\}
\end{align*}

The weight function is given by
\begin{equation*}
	\wt_i(x) = \langle \varepsilon(x)-\varphi(x) , \alpha^\vee_i \rangle
\end{equation*}

An element $x\in B$ is \Dfn{highest weight} if $e_i(x)=0$, or, equivalently, if $\varepsilon_i(x)=0$ for all $i\in I$.

A homomorphism between crystals induces a weight-preserving partial
function between the highest weight elements and is determined by this partial function.

The \Dfn{tensor product} of crystals $B$ and $C$ is constructed as
follows:

The set underlying $B\otimes C$ is $B\times C$. The partial functions,
$e_i$, for $i\in I$, are defined by
\begin{align*}
	e_i(x\otimes y) &= \begin{cases}
		e_i(x)\otimes y & \text{if $\varphi(x)\ge\varepsilon(y)$} \\
		x\otimes e_i(y) & \text{otherwise}
	\end{cases} \\
\end{align*}

The functions $\varepsilon_i$ and $\varphi_i$ are defined by
\begin{align*}
\varepsilon_i(x\otimes y) &= \varepsilon_i(x) +
\max\{0,\varepsilon_i(y)-\varphi_i(x)\} \\
\varphi_i(x\otimes y) &= \varphi_i(x) +
\max\{0,\varphi_i(x)-\varepsilon_i(y)\}
\end{align*}

Then we have $\wt(x\otimes y)=\wt(x)+\wt(y)$.

The highest weight elements of $B\otimes C$ are the elements $x\otimes y$
such that $x\in B$ is highest weight and
$\varepsilon_i(y)\le\varphi_i(x)$ for $i\in I$.

The main result of~\cite{Henriques2006} is that the monoidal category of crystals is a coboundary category. This gives
a natural action of $\fC_r$ on the set $B(\omega)$ for each dominant weight, $\omega$.

There is a second construction of the coboundary structure in \cite{Kamnitzer2009} using the Kashiwara involution. This construction is given on highest weight
elements by the formula
\begin{equation*}
\sigma_{B_\lambda,B_\mu}(b_\lambda\otimes c) = b_\mu \otimes \ast c
\end{equation*}
\subsection{Words}
Let $C$ be a crystal of a finite type Cartan matrix.
For $r\ge 0$, let $\otimes^r C$ be the crystal of words in $C$ of length $r$. Then we have a canonical isomorphism of crystals
\begin{equation}\label{eq:cdec}
\otimes^r C \cong \coprod_\omega C(\omega) \times B(\omega)
\end{equation}
where $B(\omega)$ is a set and $C(\omega)$ is a connected crystal.
This is an isomorphism of crystals so for $w\leftrightarrow (P,Q)$
we have $e_i w\leftrightarrow (e_i P,Q)$ and $f_i w\leftrightarrow (f_i P,Q)$ for all $\alpha\in I$.

Let $w\leftrightarrow (P,Q)$ under this correspondence. Then $P$ generalises the insertion tableau and $Q$ generalises the recording tableau. For $w\leftrightarrow (P,Q)$ and $w'\leftrightarrow (P',Q')$; then $P=P'$ means $w$ and $w'$ are in the same position in isomorphic components; and $Q=Q'$ means $w$ and $w'$ are in the same component.

In special cases there is a combinatorial construction of the sets $B(\omega)$ and a corresponding insertion algorithm. In general we can take $B(\omega)$ to be the set of highest weight words of length $r$ and weight $\omega$.

\subsection{Tableaux}
Take $C$ to be the crystal of the vector representation of $\GL(n)$.
Then the decomposition \eqref{eq:cdec} is the Robinson-Schensted correspondence with $B(\omega)$ the set of 
standard tableaux of shape $\omega$.

Let $\omega$ be a partition of size $r$. Then we have an action of
the $r$-fruit cactus group, $\fC_r$, on the set of standard tableaux
of shape $\omega$. The group $\fC_r$ is generated by the elements
$s_{1,\,p}$ for $2\le p\le r$; so the action of $\fC_r$ is determined
by the action of these elements.
For $2\le p\le r$, the action of $s_{1,\,p}$ is given by applying evacuation to the subtableau with entries $1,\dotsc ,p$ leaving the remaining entries fixed. 

\begin{ex} Take $C$ to be the crystal of the two dimensional representation of $\SL(2)$. The set $B(0)$ is the set of noncrossing perfect matchings on $r$ points. The action of $s_{1,\,p}$ is given
by the rule that each pair $(i,j)$ with $i<j$ gives a pair
\begin{equation*}
	\begin{cases}
(p-j+1,p-i+1) & \text{if $i<j\le p$}	\\
(p-i+1,j) & \text{if $i\le p<j$}	\\
(i,j) & \text{if $p<i<j$}
	\end{cases}
\end{equation*}
The case $r=6$ is shown in Figure~\ref{fig:cat}.
\end{ex}

Then we have the following properties:
 \begin{itemize}
 	\item evacuation is given by the action of $s_{1,\,r}$
 	\item promotion is given by the action of $s_{1,\,r}\,s_{2,\,r}$
 	\item the elements $s_{p,\,p+1}$ act trivially
 	\item the dual Knuth move $D_i$ is given by the action of $s_{i,\,i+2}$
 \end{itemize}
 
\begin{figure}
	\begin{tikzpicture}[shorten >=1pt,auto,node distance=3.5cm,semithick,state/.style={}]
	\node[state] (A) {\begin{ytableau} 1 & 2 & 3\\ 4 & 5 & 6\end{ytableau}};
	\node[state] (B) [right of=A] {\begin{ytableau} 1 & 2 & 4\\ 3 & 5 & 6 \end{ytableau}};
	\node[state] (C) [right of=B] {\begin{ytableau} 1 & 3 & 4\\ 2 & 5 & 6 \end{ytableau}};
	\node[state] (D) [below of=A] {\begin{ytableau} 1 & 3 & 5 \\ 2 & 4 & 6 \end{ytableau}};
	\node[state] (E) [right of=D] {\begin{ytableau} 1 & 2 & 5 \\ 3 & 4 & 6 \end{ytableau}};
	\path (A) edge [bend left=30] node {$(2,4)$} (E);
	\path (B) edge [bend right=30] node {$(3,5)$} (A);
	\path (B) edge node {$(1,3)$} (C);
	\path (C) edge node {$(3,5)$} (D);
	\path (D) edge [bend left=30] node {$(2,4)$} (E);
	\path (E) edge [bend left=30] node {$(1,3)$} (D);
	\path (A) edge [bend left=45] node {$(1,4)$} (C);
	\path (C) edge [bend left=45] node {$(2,5)$} (E);
	\path (C) edge [bend right=0] node {$(1,6)$} (E);
	\path (E) edge [bend left=30] node {$(1,5)$} (A);
	\path (D) edge [bend left=30] node {$(1,5)$} (B);
	\end{tikzpicture}
\caption{Representation of $\fC_6$}\label{fig:cat}
\end{figure}

\subsection{Local rules}
Fix a crystal $C$. Then the action of $\fC_r$ on $\otimes^rC$ is determined by the involutions $\tau^B_{C,C}$
where $B$ is arbitrary. These involutions are crystal homomorphisms and so are determined by the restriction
to highest weight elements.

\begin{defn} A representation is \Dfn{minuscule} if the Weyl group acts transitively on the weights of the representation.
\end{defn}

The non-trivial minuscule representations are known and,
for the convenience of the reader, we give the list here:
\begin{description}
	\item[type $A_n$] All exterior powers of the vector representation.
	\item[type $B_n$] The spin representation.
	\item[type $C_n$] The vector representation.
	\item[type $D_n$] The vector representation and the two half-spin representations.
	\item[type $E_6$] The two fundamental representations of dimension 27.
	\item[type $E_7$] The fundamental representation of dimension 56.
\end{description}
There are no nontrivial minuscule representations in types $G_2$, $F_4$ or $E_8$.

\begin{defn}\label{def:dom}
For each weight $\lambda$, there is a unique weight which is both
dominant and in the Weyl group orbit of $\lambda$. Denote this
element by $\mathrm{dom}_W(\lambda)$.
\end{defn}

\begin{ex} For $\GL(n)$, the corner labels are weakly decreasing sequences of integers of length $n$. The Weyl group is $\fS_n$ and $\dom_{\fS_n}$ takes a sequence of integers of length $n$ and rearranges into weakly decreasing order. In this case $\dom_{\fS_n}$
is denoted by $\sort$.
\end{ex}

\begin{ex} For $\Sp(2n)$, the corner labels are partitions of length $n$. The Weyl group is a hyperoctahedral group. The map $\dom_W$ takes a sequence of integers of length $n$, forms the absolute values, and rearranges into weakly decreasing order.
\end{ex}

The following interpretation of~\cite[Rule~4.1.1]{Leeuwen1998} is given in~\cite[Proposition~4.1]{Lenart2008}.
\begin{thm}\label{thm:dom} Using the notation in Figure~\ref{fig:localrule}; for minuscule crystals $B$ and $C$ the following are equivalent
	\begin{align*}
	\tau^A_{B,C}(b,h,v)&=(a,v',h') & \mu &= \mathrm{dom}_W (\kappa+\nu-\lambda) \\
	\tau^A_{C,B}(b,v',h')&=(a,h,v) & \lambda &= \mathrm{dom}_W (\kappa+\nu-\mu)
	\end{align*}
\end{thm}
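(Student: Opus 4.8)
The plan is to restrict to highest weight elements, encode these by chains of dominant weights, and identify the exchange of intermediate weights effected by the local rule with the stated $\dom_W$ formula; the minuscule hypothesis on $B$ and $C$ is what keeps the weight bookkeeping clean throughout.

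First I would reduce to highest weight elements, since $\tau^A_{B,C}$ is a crystal morphism and so is determined by its restriction to them, as noted before the theorem. A highest weight element of $A\otimes B\otimes C$ is $b\otimes h\otimes v$; applying the tensor product rule to the splitting $(A\otimes B)\otimes C$ shows that $b\otimes h$ is itself highest weight, and hence so is $b$. Writing $\kappa=\wt(b)$, this gives $\lambda=\kappa+\wt(h)$ with $\lambda-\kappa\in\wt(B)$, and the total weight $\nu=\lambda+\wt(v)$ with $\nu-\lambda\in\wt(C)$. Because $B$ and $C$ are minuscule, their weights form a single Weyl orbit of multiplicity one, so $h$ and $v$ are the unique elements of those weights and the input is recorded exactly by the chain $\kappa\to\lambda\to\nu$; likewise the output $a\otimes v'\otimes h'$ is recorded by $\kappa\to\mu\to\nu$ with $\mu-\kappa\in\wt(C)$ and $\nu-\mu\in\wt(B)$. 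In the labelling of Figure~\ref{fig:localrule} the weight $\kappa$ of the outer factor and the total weight $\nu$ are fixed by the rule, while $\lambda,\mu$ are the two intermediate weights according as $B$ or $C$ is taken first. Thus the theorem amounts to showing that the local rule exchanges $\lambda$ and $\mu$ by $\mu=\dom_W(\kappa+\nu-\lambda)$.

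Next I would prove a purely combinatorial lemma: for dominant $\kappa,\nu$ and a valid intermediate $\lambda$ as above, the weight $\mu:=\dom_W(\kappa+\nu-\lambda)$ is again a valid intermediate, i.e. $\mu-\kappa\in\wt(C)$ and $\nu-\mu\in\wt(B)$. The mechanism is the minuscule fact that, for dominant $\kappa$ and $\beta\in\wt(C)$, the dominant conjugate $\dom_W(\kappa+\beta)$ has the form $\kappa+\beta'$ with $\beta'\in\wt(C)$; applying this to $\beta=\nu-\lambda$ produces $\mu$, and the symmetric statement for $B$ gives $\nu-\mu\in\wt(B)$. The same lemma shows that $\lambda\mapsto\mu$ is an involution and that $\lambda=\dom_W(\kappa+\nu-\mu)$, which is the second displayed weight equation; its equivalence with the first is precisely the $\lambda\leftrightarrow\mu$ symmetry of the formula, and it matches the fact that $\tau^A_{C,B}$ is the inverse of $\tau^A_{B,C}$ by Lemma~\ref{lemma:rev}.

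It then remains to show the commutor actually realizes $\lambda\mapsto\dom_W(\kappa+\nu-\lambda)$. I would compute $\tau^A_{B,C}=\sigma_{B,A\otimes C}\circ(\sigma_{A,B}\otimes 1_C)$ on the chain $\kappa\to\lambda\to\nu$ using the description of the commutor on highest weight elements by the Kashiwara involution, $\sigma_{B_\lambda,B_\mu}(b_\lambda\otimes c)=b_\mu\otimes{\ast}c$; equivalently, and more directly, this is exactly the content of van Leeuwen's Rule~4.1.1 as interpreted in \cite{Lenart2008}, which computes the minuscule commutor. Granting this, necessity is immediate, and sufficiency follows because by the lemma a valid output is determined by its intermediate weight, which the equation forces to equal $\dom_W(\kappa+\nu-\lambda)$; the second row is the same argument for $\tau^A_{C,B}$ paired with the symmetric equation, so all four statements are equivalent. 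The main obstacle is this last identification: extracting the clean additive rule $\lambda\mapsto\dom_W(\kappa+\nu-\lambda)$ from the commutor, since the general coboundary construction is technical. Here I would lean on the explicit minuscule rule of \cite{Leeuwen1998} rather than re-deriving it, with the transitivity of $W$ on the minuscule weights doing the essential work both there and in the well-definedness lemma.
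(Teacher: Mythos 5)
The paper offers no proof of this theorem at all: it is stated as a quotation of \cite[Rule~4.1.1]{Leeuwen1998} as interpreted in \cite[Proposition~4.1]{Lenart2008}, and your proposal ultimately defers the one substantive step --- that the minuscule commutor acts on intermediate weights by $\lambda\mapsto\dom_W(\kappa+\nu-\lambda)$ --- to exactly that same source. The scaffolding you supply around it (restriction to highest weight elements, the encoding of such elements by chains of dominant weights using that minuscule crystals have all weight spaces of multiplicity one, and the well-definedness/involution lemma making the two weight equations and the two $\tau$ equations pairwise equivalent) is correct and consistent with the paper's setup, so this is essentially the same approach.
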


\subsection{Bender-Knuth}
In this section we relate the Bender-Knuth involutions, introduced in \cite{Bender1972},
to local rules.

Denote the $i$-th exterior power of the vector representation of $\GL(n)$ by $\Lambda^i$. A tableau is \Dfn{semistandard}
if the entries weakly increasing along the
rows and strictly decrease down the columns. A tableau is \Dfn{dual semistandard}
if the entries strictly increasing along the
rows and weakly decrease down the columns.

It follows from the dual Pieri rule that the set of highest weight elements in the crystal of the tensor product
\begin{equation*}
\Lambda^{i_1}\otimes \Lambda^{i_2}\otimes \dotsb \otimes \Lambda^{i_r}
\end{equation*}
of weight $\lambda$ corresponds to dual semistandard tableaux of shape $\lambda$ with entries in $[r]$, weight $(i_1,i_2,\dotsc ,i_r)$. Hence the coboundary structure
on the category of crystals gives an action of the $r$-fruit cactus group, $\fC_r$, on the 
set of dual semistandard tableaux with entries in $[r]$ and shape $\lambda$.

Semistandard tableaux correspond to Gelfand-Tsetlin patterns.
A \Dfn{Gelfand-Tsetlin pattern} of length $r$ is a sequence of partitions,
\begin{equation*}
\emptyset = \lambda^{(0)} \subseteq \lambda^{(1)} \subseteq  \dotsb \subseteq \lambda^{(r)}
\end{equation*}
such that each skew shape $\lambda^{(k)}/\lambda^{(k-1)}$ is a horizontal strip.
The length is $r$ and the shape is the final shape $\lambda^{(r)}$.

Similarly, dual semistandard tableaux correspond to sequences of partitions,
\begin{equation}\label{eq:seq}
\emptyset = \lambda^{(0)} \subseteq \lambda^{(1)} \subseteq  \dotsb \subseteq \lambda^{(r)}
\end{equation}
such that each skew shape $\lambda^{(k)}/\lambda^{(k-1)}$ is a vertical strip.
The length is $r$ and the shape is the final shape $\lambda^{(r)}$.

The exterior powers of the vector representation of $\GL(n)$ are all minuscule.
Hence the local moves are given by Theorem~\ref{thm:dom}.
The local move, $\tau_i$, acting on the sequence \eqref{eq:seq}, changes $\lambda^{(i)}$ to
\begin{equation}
\sort(\lambda^{(i-1)}-\lambda^{(i)}+\lambda^{(i+1)})
\end{equation} and does not change the remaining
partitions. These local rules determine the action of $\fC_r$ on dual semistandard
tableaux with entries in $[r]$.

Denote the conjugate of a tableau $T$ by $T^\mathtt{t}$. The Bender-Knuth involutions
are involutions on semistandard tableaux. The Bender-Knuth involution $b_i$ acts by changing some of the entries $i$ of the tableau to $i+1$, and some of the entries $i+1$ to $i$, in such a way that the numbers of elements with values $i$ or $i+1$ are exchanged. 
The relation between the local rules $\tau_i$ acting on dual semistandard tableaux
and the Bender-Knuth involution acting on standard tableaux is:
\begin{lemma}For any dual semistandard tableau, $T$,
\begin{equation*}
b_i(T^\mathtt{t}) = \tau_i(T)^\mathtt{t}
\end{equation*}
\end{lemma}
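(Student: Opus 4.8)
The plan is to relate the two sides of the identity $b_i(T^{\mathtt{t}}) = \tau_i(T)^{\mathtt{t}}$ by passing through the combinatorial encoding in terms of sequences of partitions. Recall that a dual semistandard tableau $T$ of shape $\lambda$ corresponds, via the chain of shapes in \eqref{eq:seq}, to a sequence $\emptyset = \lambda^{(0)} \subseteq \lambda^{(1)} \subseteq \dotsb \subseteq \lambda^{(r)} = \lambda$ in which each skew shape $\lambda^{(k)}/\lambda^{(k-1)}$ is a vertical strip, and that conjugation $T \mapsto T^{\mathtt{t}}$ sends this to the conjugate sequence $(\lambda^{(k)})^{\mathtt{t}}$, in which each skew shape is now a horizontal strip; this is precisely the Gelfand--Tsetlin pattern (equivalently, the semistandard tableau) underlying $T^{\mathtt{t}}$. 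The proof then reduces to comparing the action of $\tau_i$ on the vertical-strip chain with the action of $b_i$ on the horizontal-strip chain, both expressed purely in terms of the intermediate shapes.

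\textbf{Key steps.} First I would record the explicit formula for $\tau_i$ already computed in the excerpt: acting on \eqref{eq:seq} it replaces $\lambda^{(i)}$ by $\sort(\lambda^{(i-1)} - \lambda^{(i)} + \lambda^{(i+1)})$, leaving all other shapes fixed, where the arithmetic is coordinatewise on the length-$n$ integer vectors. Second I would invoke the standard description of the Bender--Knuth involution $b_i$ in Gelfand--Tsetlin language: $b_i$ acts on a semistandard chain $\mu^{(0)} \subseteq \dotsb \subseteq \mu^{(r)}$ by altering only the intermediate shape $\mu^{(i)}$ according to a local flip determined by the neighbours $\mu^{(i-1)}$ and $\mu^{(i+1)}$ (the classical Bender--Knuth rule in its GT-pattern form, which is a row-by-row complementation). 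Third, the heart of the argument is to verify that conjugation intertwines these two local operations: writing $\mu^{(k)} = (\lambda^{(k)})^{\mathtt{t}}$, I would check that the new shape produced by $b_i$ from $(\mu^{(i-1)}, \mu^{(i)}, \mu^{(i+1)})$ is the conjugate of $\sort(\lambda^{(i-1)} - \lambda^{(i)} + \lambda^{(i+1)})$. Since both $\tau_i$ and $b_i$ modify only the single shape in position $i$ and fix the rest, establishing this one identity on intermediate shapes proves the lemma.

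\textbf{The main obstacle} will be the third step: matching the $\sort$-of-alternating-sum formula for $\tau_i$ against the combinatorial flip defining $b_i$, under conjugation of partitions. The two descriptions look quite different on the surface — one is a piece of Weyl-group arithmetic ($\dom_{\fS_n}$ applied to $\lambda^{(i-1)} - \lambda^{(i)} + \lambda^{(i+1)}$, arising from Theorem~\ref{thm:dom}) and the other is a classical content-preserving swap of $i$'s and $(i{+}1)$'s. I expect the cleanest route is to reduce both to a statement about a single pair of nested horizontal/vertical strips, i.e.\ to verify the identity $(\sort(a - b + c))^{\mathtt{t}}$ equals the Bender--Knuth flip of conjugates whenever $a \subseteq b$, $b \subseteq c$ with the appropriate strip conditions, and then observe that the vertical-strip constraints on the $\lambda^{(k)}$ are exactly the horizontal-strip constraints on the $\mu^{(k)}$ that make $b_i$ well-defined. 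Because both sides are involutions modifying only position $i$, once this local identity is checked the global statement follows without further bookkeeping.
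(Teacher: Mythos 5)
Your reduction is sound, and it is in fact the route the paper itself takes implicitly: the paper states this lemma with no proof at all and only illustrates it with the worked example that follows, which performs exactly your steps (pass to the chain of shapes, conjugate, apply the $\sort(\lambda^{(i-1)}-\lambda^{(i)}+\lambda^{(i+1)})$ rule to the single shape in position $i$, conjugate back). Your observations that conjugation exchanges vertical-strip chains with horizontal-strip chains (Gelfand--Tsetlin patterns), and that both $b_i$ and $\tau_i$ fix every shape except the $i$-th, are correct and do reduce the lemma to one local identity on a triple $a\subseteq b\subseteq c$ of partitions with $b/a$ and $c/b$ vertical strips.

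The difficulty is that this local identity is the entire mathematical content of the lemma, and your write-up only announces that you \emph{would} check it; you correctly name it as ``the main obstacle'' and then stop. To close the gap you must fix a concrete description of $b_i$ on chains --- for instance the toggle that replaces the middle shape $b^{\mathtt{t}}$ by the partition with parts $\min(c^{\mathtt{t}}_j,a^{\mathtt{t}}_{j-1})+\max(c^{\mathtt{t}}_{j+1},a^{\mathtt{t}}_j)-b^{\mathtt{t}}_j$ (with $a^{\mathtt{t}}_0=\infty$), which encodes the classical swap of free $i$'s and $(i{+}1)$'s --- and verify that its conjugate is $\sort(a-b+c)$. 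A workable route: since the strips are vertical, each coordinate of $a-b+c$ is $b_j+\epsilon_j-\delta_j$ with $\epsilon_j=c_j-b_j$ and $\delta_j=b_j-a_j$ in $\{0,1\}$, so $\sort(a-b+c)$ is again a partition $b'$ with $a\subseteq b'\subseteq c$, both skew shapes vertical strips, and $\lvert b'/a\rvert=\lvert c/b\rvert$; one then matches this column by column against the toggle formula after transposing. As it stands your submission is a correct and well-organized plan, but not yet a proof, because the one step that distinguishes a true identity from a false one is asserted rather than carried out.
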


\begin{ex} We apply $b_2$ to the tableau
	\begin{equation*} T = 
	\begin{ytableau}
	1 & 1 & 1 & 2 \\ 2 & 3 \\ 4
	\end{ytableau}
	\end{equation*}

The Gelfand-Tsetlin pattern is the sequence of partitions
\begin{equation*}
[], [3], [4,1], [4,2], [4,2,1], [4,2,1], \dotsc
\end{equation*}
Taking conjugates gives the sequence of partitions
\begin{equation*}
[],[1,1,1],[2,1,1,1],[2,2,1,1],[3,2,1,1],[3,2,1,1],\dotsc
\end{equation*}
Now apply the local rule
\begin{equation*}
\sort([1,1,1,0]-[2,1,1,1]+[2,2,1,1])=\sort([1,2,1,0])=[2,1,1]
\end{equation*}
Replace the partition $[2,1,1,1]$ by $[2,1,1]$ to get the sequence
\begin{equation*}
[],[1,1,1],[2,1,1],[2,2,1,1],[3,2,1,1],[3,2,1,1],\dotsc
\end{equation*}
Take conjugates to get the sequence
\begin{equation*}
[], [3], [3,1], [4,2], [4,2,1], [4,2,1], \dotsc
\end{equation*}
This is the Gelfand-Tsetlin pattern of the tableau
	\begin{equation*} b_2(T) = 
\begin{ytableau}
1 & 1 & 1 & 3 \\ 2 & 3 \\ 4
\end{ytableau}
\end{equation*}
\end{ex}
\subsection{Continuous crystals}
The theory of continuous crystals for any Coxeter group is developed
in \cite{Biane2009}. This is an extension of the Littelmann path model.
In particular they construct an analogue of the Lusztig involution
in \S 4.10. This then gives a coboundary structure on the category
of continuous crystals by the construction in \cite{Henriques2006}.

The local moves for these coboundary categories are given in
\cite[\S~5]{Leeuwen1998}.

The following is \cite[5.1~Rule]{Leeuwen1998}.
\begin{defn}
	Let $f\colon [0,1]\times [0,1]\to X_\bQ$ be a piecewise linear function.
	The function $f$ satisfies the \Dfn{local rules} if, for every
	pair of intervals $[s_0,s_1],[t_0,t_1]\subseteq [0,1]$ such that
	$f$ is linear on $\{s_0\}\times [t_0,t_1]$ and $[s_0,s_1]\times \{t_1\}$
	one has
	\begin{equation*}
		f(s,t)=\dom_W(f(s_0,t)+f(s,t_1)-f(s_0,t_1))
	\end{equation*}
\end{defn}

The following is \cite[5.3~Theorem]{Leeuwen1998}.
\begin{thm}
	Let $\kappa$, $\lambda$, $\nu$ be dominant weights.
	Let $\pi'$ be a $\kappa$-dominant path with $\pi'(1)=\lambda-\kappa$
	and $p$ a $\lambda$-dominant path with $p(1)=\nu-\lambda$.
	Then there is a unique $f\colon [0,1]\times [0,1]\to X_\bQ$
	which satisfies the local rules and such that $f(0,t)=\kappa+\pi'(t)$
	for $0\le t\le 1$ and $f(s,1)=\lambda+p(s)$ for $0\le s\le 1$.
\end{thm}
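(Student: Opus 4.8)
The plan is to construct the function $f$ by filling in the unit square cell-by-cell using the local rule as a recursive definition, then verify existence, uniqueness, and consistency. The key observation is that the local rule

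$$f(s,t)=\dom_W\!\big(f(s_0,t)+f(s,t_1)-f(s_0,t_1)\big)$$

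expresses the value of $f$ at a point $(s,t)$ in the interior (or on the upper-right boundary of a linear patch) in terms of values on the left edge $\{s_0\}\times[t_0,t_1]$ and the top edge $[s_0,s_1]\times\{t_1\}$ of a rectangle whose top-right corner is $(s_1,t_1)$. Since the two boundary data $f(0,t)=\kappa+\pi'(t)$ and $f(s,1)=\lambda+p(s)$ are prescribed, this is a propagation scheme that fills the square starting from the top and left edges and works downward and to the right.

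First I would reduce to the piecewise-linear combinatorial core. The paths $\pi'$ and $p$ are piecewise linear, so there is a common subdivision $0=t_0<t_1<\dots<t_m=1$ and $0=s_0<s_1<\dots<s_n=1$ such that $\pi'$ is linear on each $[t_{j-1},t_j]$ and $p$ is linear on each $[s_{i-1},s_i]$. The claim is that the corner values $f(s_i,t_j)$ are forced, and that $f$ is determined on each subrectangle $[s_{i-1},s_i]\times[t_{j-1},t_j]$ by linear interpolation along diagonals once its boundary data are known. Concretely, for each elementary rectangle the local rule computes the bottom-right corner $f(s_i,t_{j-1})$ from the left edge value $f(s_{i-1},t_{j-1})$, the top edge value $f(s_i,t_j)$, and the top-left corner $f(s_{i-1},t_j)$; iterating over rows from top to bottom and within each row from left to right produces all corner values uniquely, which establishes uniqueness immediately. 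Existence then amounts to checking that this discrete assignment extends to a genuine piecewise-linear $f$ on the whole square that is linear on the requisite strips and satisfies the local rule at every admissible pair of subintervals, not merely at the grid corners.

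The hard part will be the consistency check: one must show that the locally-defined interpolations agree on shared edges of adjacent cells and that the $\dom_W$ operation, which is only piecewise linear (it is a folding of a Weyl-chamber arrangement), does not introduce hidden non-linearities inside a cell. The governing identity here is precisely the compatibility encoded in Theorem~\ref{thm:dom}, the minuscule local-rule theorem: applying $\dom_W$ to $\kappa+\nu-\lambda$ to obtain $\mu$ and applying it again to $\kappa+\nu-\mu$ to recover $\lambda$ shows that the local rule is \emph{reversible}, so the propagation from the top-left data is consistent with reading off the bottom-right data. I would use this reversibility, together with Lemma~\ref{lemma:rev}, to argue that the fillings agree on internal edges; the key algebraic fact is that $\dom_W$ is well-defined and single-valued on each weight (Definition~\ref{def:dom}), so the candidate value at each interior point is unambiguous.

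Finally I would verify the boundary conditions and the global local-rule property. The construction takes $f(0,t)=\kappa+\pi'(t)$ and $f(s,1)=\lambda+p(s)$ as the initial data, so these hold by fiat; what remains is to confirm that the interpolated $f$ is linear on every $\{s\}\times[t_0,t_1]$ and $[s_0,s_1]\times\{t\}$ on which linearity is demanded, and that the local-rule equation then holds for \emph{arbitrary} admissible subinterval pairs, not only the grid-aligned ones. This follows because $\dom_W$ commutes with the affine interpolation used inside each cell precisely when the path segment stays within a single Weyl chamber, which is guaranteed by the $\kappa$-dominance of $\pi'$ and the $\lambda$-dominance of $p$ in the hypotheses; these dominance conditions are exactly what prevent $\dom_W$ from acting nontrivially on the boundary data and thereby keep the propagation linear on each strip. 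I expect the dominance hypotheses to do the real work in this last step.
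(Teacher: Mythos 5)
The paper does not prove this statement: it is quoted verbatim as \cite[5.3~Theorem]{Leeuwen1998} and used as an imported result, so there is no internal proof to compare against. Judged on its own terms, your propagation scheme (determine $f$ from the top and left edges, working toward the bottom-right corner, one subrectangle at a time) is the correct general shape of van Leeuwen's argument, and the reduction to a common subdivision of $\pi'$ and $p$ is the right starting point. But the step you lean on in your final paragraph is wrong, and it is the crux of the matter: the dominance hypotheses do \emph{not} ``prevent $\dom_W$ from acting nontrivially'' or ``keep the propagation linear on each strip.'' The whole point of the rule is that the expression $f(s_0,t)+f(s,t_1)-f(s_0,t_1)$ leaves the dominant chamber in the interior of the square, and $\dom_W$ folds it back, creating new linearity breakpoints on the rows and columns of $f$ that are not present in the grid you fixed at the outset. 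A single fixed subdivision therefore does not suffice: the induction must refine the grid dynamically, and the real content of the existence proof is a closure property --- that each newly produced column $t\mapsto f(s,t)$ and row $s\mapsto f(s,t)$ is again piecewise linear and dominant relative to its initial corner, so that the linearity hypotheses of the local rule are available for the next propagation step. That closure property is asserted nowhere in your sketch, and without it both the recursion and the verification that the rule holds for \emph{all} admissible interval pairs (not just grid-aligned ones) collapse.

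A second, smaller problem is logical order: you invoke Theorem~\ref{thm:dom} and Lemma~\ref{lemma:rev} to get consistency and reversibility, but in this paper Theorem~\ref{thm:dom} is the \emph{discretisation} of the present theorem (it is how the minuscule local rules are extracted from the path model), and Lemma~\ref{lemma:rev} lives in the abstract coboundary category, where the commutor is only identified with the path-model construction \emph{after} this theorem is available. Using either here is circular. The reversibility you need --- that $\mu=\dom_W(\kappa+\nu-\lambda)$ and $\lambda=\dom_W(\kappa+\nu-\mu)$ determine one another on a single cell --- must be proved directly from properties of the folding map $\dom_W$ of Definition~\ref{def:dom}, not imported from its crystal-theoretic corollaries.
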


Putting $\mu=f(1,0)$ we define a $\kappa$-dominant path, $\pi$,
with $\pi(1)=\mu-\kappa$ by $\pi(s)=f(s,0)-\kappa$ for $0\le s\le 1$
and a $\mu$-dominant path, $p'$, with $p'(1)=\nu-\mu$ by
$p'(t)=f(1,t)-\mu$ for $0\le t\le 1$.

\begin{thm}
	Let $\kappa$, $\lambda$, $\nu$ be dominant weights.
	Let $\pi'$ be a $\kappa$-dominant path with $\pi'(1)=\lambda-\kappa$
	and $p$ a $\lambda$-dominant path with $p(1)=\nu-\lambda$. Then	
	\begin{align*}
	\tau^{C(\kappa)}_{{C(\lambda)},C(\mu)}(a,\pi',p)&=(a,\pi,p')  \\
	\tau^{C(\kappa)}_{C(\mu),C(\lambda)}(a,\pi,p')&=(a,\pi',p)
\end{align*}
where $a\in C(\kappa)$ is highest weight.
\end{thm}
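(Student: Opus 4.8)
The plan is to show that the two-dimensional piecewise-linear function $f$ produced by the previous theorem encodes exactly the action of the local rule $\tau^{C(\kappa)}_{C(\lambda),C(\mu)}$ on the triple $(a,\pi',p)$, and that the symmetry of the setup delivers both the forward identity and its inverse. First I would recall that, since $a\in C(\kappa)$ is highest weight, the local rule $\tau^{C(\kappa)}_{C(\lambda),C(\mu)}$ is a crystal homomorphism and is therefore determined by its restriction to highest weight elements, so it suffices to verify the formula on the highest weight word $(a,\pi',p)$. The essential point is that in the path model a highest weight element of the tensor product $C(\kappa)\otimes C(\lambda)\otimes C(\mu)$ is recorded by a concatenation of dominant paths, and the commutor acts by the Leeuwen/Littelmann straightening rule; the function $f$ is precisely the recording device for this straightening across a square, with its two boundary edges $f(0,t)=\kappa+\pi'(t)$ and $f(s,1)=\lambda+p(s)$ giving the input paths and the opposite edges $f(s,0)=\kappa+\pi(s)$ and $f(1,t)=\mu+p'(t)$ giving the output paths $\pi$ and $p'$.

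The key steps, in order, are as follows. First I would identify $\mu=\dom_W(f(1,0))$ as the new dominant weight and check, using the local-rule defining equation $f(s,t)=\dom_W(f(s_0,t)+f(s,t_1)-f(s_0,t_1))$, that the corner weights at the four vertices of the unit square satisfy the discrete compatibility already recorded in Theorem~\ref{thm:dom}, namely $\mu=\dom_W(\kappa+\nu-\lambda)$ (reading $\nu=f(0,0)$-type corner data) and dually $\lambda=\dom_W(\kappa+\nu-\mu)$; this matches the crystal-theoretic formula $\sigma_{B_\lambda,B_\mu}(b_\lambda\otimes c)=b_\mu\otimes{\ast}c$ on highest weight elements. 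Second, I would show that the two output paths $\pi(s)=f(s,0)-\kappa$ and $p'(t)=f(1,t)-\mu$ are, respectively, $\kappa$-dominant with $\pi(1)=\mu-\kappa$ and $\mu$-dominant with $p'(1)=\nu-\mu$; dominance of each output path follows from the local rules since $\dom_W$ lands in the dominant chamber along each boundary segment. Third, I would read off from the definition of $\tau^A_{B,C}$ as the composite $\sigma_{A,B}\otimes 1_C$ followed by $\sigma_{B,A\otimes C}$ that applying the commutor on the $B$-factor and then on the combined factor corresponds exactly to filling the square one edge at a time, so that the tableau-pair $(a,\pi,p')$ is the image of $(a,\pi',p)$.

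For the second identity, $\tau^{C(\kappa)}_{C(\mu),C(\lambda)}(a,\pi,p')=(a,\pi',p)$, I would invoke Lemma~\ref{lemma:rev}, which gives $\tau^A_{B,C}\,\tau^A_{C,B}=1$: applying $\tau^{C(\kappa)}_{C(\mu),C(\lambda)}$ to the output triple must return the input, and the uniqueness clause in the preceding theorem guarantees that the square reconstructed from the opposite pair of edges is the same $f$, so no separate computation is needed. The main obstacle I anticipate is the bookkeeping in the second step: verifying that $f$ being piecewise linear and satisfying the local rules forces each reconstructed boundary path to be genuinely dominant and to have the asserted endpoint, rather than merely piecewise-linear with the right $\dom_W$-images. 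This requires carefully tracking how $\dom_W$ interacts with path concatenation along a linear segment of the square, and it is here that the minuscule/continuous path-model hypotheses do the real work; once dominance and endpoints are secured, the remaining identifications are formal consequences of the definition of $\tau$ and of Lemma~\ref{lemma:rev}.
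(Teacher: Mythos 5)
The paper itself offers no proof of this theorem: it is imported verbatim from van Leeuwen's work (\cite[\S~5]{Leeuwen1998}, in particular the 5.1~Rule and 5.3~Theorem quoted just above it), so there is no argument in the text to compare yours against. Judged on its own terms, your outline has the right overall shape --- reduce to highest weight elements, factor $\tau^A_{B,C}$ as $\sigma_{B,A\otimes C}\circ(\sigma_{A,B}\otimes 1_C)$, and obtain the second identity from the first via Lemma~\ref{lemma:rev} --- and those peripheral pieces are fine. The dominance and endpoint bookkeeping you flag as the ``main obstacle'' is also genuinely needed and correctly identified.

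The real gap is elsewhere: the entire mathematical content of the theorem is the identification of the categorical commutor $\sigma$ (defined via the Lusztig or Kashiwara involutions) with the piecewise-linear straightening encoded by $f$, and your proposal assumes this at the outset (``the commutor acts by the Leeuwen/Littelmann straightening rule'') and then re-asserts it in your third step (``corresponds exactly to filling the square one edge at a time''). That correspondence is precisely what must be proved; as written the argument is circular. To close it you would need two genuine inputs: first, that on a highest weight pair $u\otimes w$ the commutor acts by $\sigma_{A,B}(u,w)=(\rt(w),u')$ (the rectification lemma in Section~\ref{sec:growth}, which itself rests on $\sigma$ being a weight-preserving crystal morphism with highest weight image); and second, van Leeuwen's theorem that rectification of a dominant path in the Littelmann model is computed by the local rule $f(s,t)=\dom_W(f(s_0,t)+f(s,t_1)-f(s_0,t_1))$. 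Only after both are in hand does the composite description of $\tau$ let you read the output paths off the bottom and right edges of the square. One smaller correction: $\mu$ is defined as $f(1,0)$ itself, which is already dominant by the local rules, not as $\dom_W(f(1,0))$; and the corner identity $\mu=\dom_W(\kappa+\nu-\lambda)$ is the single-cell (minuscule) degeneration of Theorem~\ref{thm:dom}, not something that holds literally for the four corners of the continuous square, where the relation is mediated by the whole function $f$ rather than by one application of $\dom_W$.
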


\section{Growth diagrams}\label{sec:growth}
The local rules are used to build growth diagrams.
Growth diagrams are used to define operations.
The main examples are:
\begin{itemize}
	\item promotion; two row growth diagrams
	\item evacuation; triangular growth diagrams
	\item rectification; rectangular growth diagrams
\end{itemize}

Our notation for a highest weight element
$x\otimes y\in B\otimes C$ is
$\lambda	\xrightarrow{\:y\:}\mu$
where $\lambda=\wt(x)$ and $\mu=\wt(x\otimes y)$. 
This notation extends to highest weight words. Given a word
$w=x_1\otimes x_2\otimes \dotsb \otimes x_r$ define the weights
$\lambda_k$ for $0\le k\le r$ by $\lambda_0=0$ and
$\lambda_i=\lambda_{i-1}+\wt(x_i)$ for $1\le k\le r$.
Then the word $w$ is highest weight if and only if
$\lambda_{k-1}\le \varphi_i(x_k)$ for $i\in I$
and $1\le k\le r$. This implies that $\lambda_k$ is dominant
for $0\le k\le r$. The word $w$ is then represented by
\begin{equation}\label{eqn:word}
0\xrightarrow{\:x_1\:}\lambda_1		\xrightarrow{\:x_2\:}
\dotsb 	\xrightarrow{\,x_{r-1}\,}\lambda_{r-1}		\xrightarrow{\:x_r\:}\lambda_r
\end{equation}
If $C$ is minuscule then we can safely omit the edge labels in
\eqref{eqn:word} and Figure~\ref{fig:localrule} as these are determined by the corner labels.

\begin{defn} A \Dfn{cell} is a square as shown in
Figure~\ref{fig:localrule}. The directed edges represent highest
weight words using the notation in \eqref{eqn:word}.
\end{defn}
\begin{figure}
	\begin{center}
		\begin{tikzpicture}
		\matrix (m) [matrix of math nodes,row sep=3em,column sep=4em,minimum width=2em]
		{
			\lambda & \nu \\
			\kappa & \mu \\};
		\path[-stealth]
		(m-2-1) edge [->] node [left] {$h$} (m-1-1);
		\path[-stealth]
		(m-1-1) edge [->] node [above] {$v$} (m-1-2);
		\path[-stealth]
		(m-2-1) edge [->] node [below] {$v'$} (m-2-2);
		\path[-stealth]
		(m-2-2) edge [->] node [right] {$h'$} (m-1-2);
		\end{tikzpicture}
	\end{center}
	\caption{Local rule}\label{fig:localrule}
\end{figure}

We say that this cell \Dfn{satisfies the local rules} if the following
two conditions are satisfied.
These two conditions are equivalent by Lemma~\ref{lemma:rev}.
\begin{equation*}
\tau^A_{B,C}(a,h,v)=(a,v',h')\qquad \tau^A_{C,B}(a,v',h')=(a,h,v)
\end{equation*}
where $a$ is the highest weight element of $A$.

A \Dfn{growth diagram} is then a diagram of cells in which each
cell satisfies the local rules.

\begin{defn} Let $B$ be a crystal and $w\in B$. Then the \Dfn{rectification} of $w$, $\rt(w)$, is the unique highest
	weight element in $B$ such that $w$ and $\rt(w)$ are in the
	same connected component of $B$.
\end{defn}

\begin{lemma} Let $B$ be a crystal and $w\in B$. Choose a crystal
	$A$ and $u\in A$ such that $u\otimes w\in A\otimes B$ is
	highest weight. Then $\sigma_{A,B}(u,w) = (\rt(w),u')$.
\end{lemma}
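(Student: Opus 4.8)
The plan is to exploit two facts from the earlier development: that $\sigma_{A,B}$ is a morphism of crystals, hence carries highest weight elements to highest weight elements, and that a connected crystal has a single highest weight element. First I would note that since $u\otimes w$ is highest weight and $\sigma_{A,B}$ commutes with every $e_i$, the image $\sigma_{A,B}(u\otimes w)$ is again a highest weight element, now of $B\otimes A$. By the description of the highest weight elements in a tensor product, this image has the form $p\otimes u'$ with $p\in B$ highest weight; the whole content of the lemma is the identification $p=\rt(w)$.

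To pin down $p$ I would reduce to connected components using the naturality of the commutor. Let $D$ be the connected component of $A$ containing $u$ and $C$ the connected component of $B$ containing $w$; since $u$ is highest weight it is the highest weight element of $D$, and by definition $\rt(w)$ is the highest weight element of $C$ regarded inside $B$. The inclusions $\iota_D\colon D\to A$ and $\iota_C\colon C\to B$ are crystal morphisms, so naturality of $\sigma$ gives
\begin{equation*}
\sigma_{A,B}(u\otimes w)=(\iota_C\otimes\iota_D)\big(\sigma_{D,C}(u\otimes w)\big).
\end{equation*}
Since being highest weight is inherited by the subcrystal $D\otimes C$, the element $\sigma_{D,C}(u\otimes w)$ is a highest weight element of $C\otimes D$, and so has first tensor factor a highest weight element of $C$. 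As $C$ is connected this factor can only be $\rt(w)$, and applying $\iota_C\otimes\iota_D$ returns $\rt(w)\otimes u'$ in $B\otimes A$, which is the claim.

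The step I expect to carry the real weight is this reduction to connected components, and more precisely the structural input that a connected finite crystal has a unique highest weight element; everything else is bookkeeping about the $e_i$-action on tensor products. It is worth emphasising that the lemma asks only for the first tensor factor, so no explicit formula for $u'$ is needed; identifying $u'$ would instead require the concrete commutor, for instance the Kashiwara involution formula $\sigma_{B_\lambda,B_\mu}(b_\lambda\otimes c)=b_\mu\otimes\ast c$ recalled above, which independently confirms that the first factor of the image is the highest weight element $\rt(w)$ of the component containing $w$.
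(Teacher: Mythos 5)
Your proof is correct and follows essentially the same route as the paper's: the paper likewise observes that the image is highest weight (so its first factor is a highest weight element of $B$) and that this factor lies in the same component as $w$ because $\sigma$ is a crystal morphism, whence it must be $\rt(w)$. Your naturality-with-respect-to-component-inclusions argument is just a more careful justification of that second step, which the paper asserts in one sentence.
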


\begin{rem} The element $u'$ is also given by the Kashiwara
involution on the crystal $B_\infty$.
\end{rem}

\begin{proof} Put $\sigma_{A,B}(u,w) = (w',u')$.
Then $w'$ is highest weight
because $w'u'$ is highest weight. Also $w$ and $w'$ are in the same
component because $\sigma$ is a crystal morphism.
\end{proof}

In terms of diagrams, we have that Lemmas~\ref{lemma:addb} and~\ref{lemma:addt} say that these are equal and similarly for two squares stacked vertically.
\begin{equation}\label{eqn:sigma}
\begin{tikzpicture}
\matrix (m) [matrix of math nodes,row sep=3em,column sep=3em,minimum width=2em]
{
	\lambda & \nu & \sigma \\
	\kappa & \mu & \rho \\};
\path[-stealth]
(m-2-1) edge [->] node [left] {$h$} (m-1-1);
\path[-stealth]
(m-1-1) edge [->] node [above] {$u$} (m-1-2);
\path[-stealth]
(m-1-2) edge [->] node [above] {$v$} (m-1-3);
\path[-stealth]
(m-2-1) edge [->] node [below] {$u'$} (m-2-2);
\path[-stealth]
(m-2-2) edge [->] node [below] {$v'$} (m-2-3);
\path[-stealth]
(m-2-3) edge [->] node [right] {$h''$} (m-1-3);
\path[-stealth]	
(m-2-2)	edge [->] node [right] {$h'$} (m-1-2);
\end{tikzpicture}
\begin{tikzpicture}
\matrix (m) [matrix of math nodes,row sep=3em,column sep=3em,minimum width=2em]
{
	\lambda &  & \sigma \\
	\kappa &  & \rho \\};
\path[-stealth]
(m-2-1) edge [->] node [left] {$h$} (m-1-1);
\path[-stealth]
(m-1-1) edge [->] node [above] {$u\otimes v$} (m-1-3);
\path[-stealth]
(m-2-1) edge [->] node [below] {$u'\otimes v'$} (m-2-3);
\path[-stealth]
(m-2-3) edge [->] node [right] {$h''$} (m-1-3);
\end{tikzpicture}
\end{equation}

This shows that rectification is given in terms of local rules by a rectangular growth diagram. This generalises the construction
of rectification of skew tableaux using Fomin growth diagrams
given in \cite[Chapter~7: Appendix~1 Figure~A1-11]{Stanley1999}.

\subsection{Evacuation}
In this section we give the growth diagram description of evacuation.
We show that the result of applying $s_{1\,r}$ to a highest weight
word, as in \eqref{eqn:word}, can be read off a triangular diagram.
The triangular diagram for the case $r=3$ is shown in Figure~\ref{fig:tri}.

Define \Dfn{evacuation} to be the action of $s_{1,\,r}$.
The growth diagram for evacuation is triangular.

\begin{figure}
\begin{center}
	\begin{tikzpicture}
	\matrix(c) [matrix of math nodes,row sep=2em,column sep=2em,minimum width=1em]
	{
		 {}&  {}&  {}& {}&\\
		 {}&  {}&  {}& {}&\\
		 {}&  {}&  {}& {}&\\
		 {}&  {}&  {}& {}&\\};
    \path[->] (c-1-1) edge (c-1-2);
    \path[->] (c-1-2) edge (c-1-3);
    \path[->] (c-1-3) edge (c-1-4);
    \path[->] (c-2-2) edge (c-2-3);
    \path[->] (c-2-3) edge (c-2-4);
    \path[->] (c-3-3) edge (c-3-4);
    \path[->] (c-2-2) edge (c-1-2);
    \path[->] (c-2-3) edge (c-1-3);
    \path[->] (c-2-4) edge (c-1-4);
    \path[->] (c-3-3) edge (c-2-3);
    \path[->] (c-3-4) edge (c-2-4);
    \path[->] (c-4-4) edge (c-3-4);
	\end{tikzpicture}
\end{center}
\caption{A triangular diagram}\label{fig:tri}
\end{figure}

\begin{defn} A \Dfn{triangular growth diagram} is a triangular diagram
with vertices labelled by dominant weights and edges labelled by elements
of crystals such that the labels of each cell satisfy the local rules
as represented in Figure~\ref{fig:localrule}. We also require that the
first vertex on each row is labelled by the zero weight.
\end{defn}
These conditions imply that a triangular growth diagram can be reconstructed from the top edge viewed as a highest weight word.
\begin{prop}\label{prop:evac} Given a triangular growth diagram
let $w$ be the top edge viewed as a highest weight word and let
 $w'$ be the right edge viewed as a highest weight word. Then
 $w'=s_{1\,r}(w)$.
\end{prop}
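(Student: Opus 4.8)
The plan is to induct on the number of fruits $r$, peeling off the top horizontal strip of cells and recognising it, via Lemma~\ref{lem:lm}, as a single cyclic commutor. Write the top edge as the highest weight word $w=x_1\otimes\dotsb\otimes x_r$ in the notation of \eqref{eqn:word}, so that $x_1$ is the edge issuing from the corner labelled by the zero weight. The base cases are immediate: for $r=1$ the right edge equals the top edge and there is nothing to prove, while for $r=2$ the diagram is one cell whose local rule is $\tau^I_{B,B}=\sigma_{B,B}$, which is the action of $s_{1,2}$.

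For the inductive step I would first analyse the strip of cells lying between the top two rows. By construction its left boundary edge is the first letter $x_1$ (equivalently, the leftmost vertex of the second row carries the zero weight), so Lemmas~\ref{lemma:addb} and~\ref{lemma:addt}, applied as in the horizontal composition of \eqref{eqn:sigma}, collapse the strip into a single cell whose base point is the highest weight element of weight $0$. By Lemma~\ref{lem:lm} this cell is exactly the commutor $\sigma_{B,\otimes^{r-1}B}$. Writing $R$ for the second row, a highest weight word of length $r-1$, and $\ell$ for the top vertical edge on the right-hand boundary, this reads $\sigma_{B,\otimes^{r-1}B}(w)=R\otimes\ell$.

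Next I would observe that rows $2,\dotsc,r+1$ form a triangular growth diagram of size $r-1$ whose top edge is $R$, so the inductive hypothesis identifies its right edge with $s_{1,r-1}(R)$. Since the right edge of the full diagram is this shorter right edge followed by $\ell$, it represents $(s_{1,r-1}\otimes 1_B)\circ\sigma_{B,\otimes^{r-1}B}$ evaluated at $w$. It therefore remains to prove the cactus group identity $s_{1,r}=s_{1,r-1}\,\sigma_{1,1,r}$, where the generator $\sigma_{1,1,r}$ acts as the cyclic commutor $\sigma_{B,\otimes^{r-1}B}$ and $s_{1,r-1}$ acts as $s_{1,r-1}\otimes 1_B$ on the first $r-1$ factors.

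This identity I would deduce from the relations already recorded. Relation \eqref{eqn:rel} with split index $1$ gives $s_{1,r}=\sigma_{1,1,r}\,s_{2,r}$, hence $\sigma_{1,1,r}=s_{1,r}\,s_{2,r}$; the third relation of Definition~\ref{defn:cactus}, applied with $\os{1,r-1}\subseteq\os{1,r}$, gives $s_{1,r}\,s_{1,r-1}\,s_{1,r}=s_{2,r}$, that is $s_{1,r-1}=s_{1,r}\,s_{2,r}\,s_{1,r}$. Substituting and cancelling with $s_{1,r}^2=s_{2,r}^2=1$ yields $s_{1,r-1}\,\sigma_{1,1,r}=s_{1,r}$, completing the induction. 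The main obstacle is the bookkeeping in this last step: one must check that the strip realises $\sigma_{B,\otimes^{r-1}B}$ in the correct direction, so that the order of composition matches the above factorisation of $s_{1,r}$ rather than its reverse. Since a coboundary category is not symmetric, agreement of the underlying permutations $\widehat{\sigma}_{1,1,r}$ and $\widehat{s}_{1,r}$ is only a consistency check; the equality must be established in $\fC_r$, as done above.
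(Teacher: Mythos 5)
Your argument is correct, and it is essentially the mirror image of the paper's proof. Both arguments proceed by induction on $r$, and both consist of cutting one strip of cells off the triangle, recognising that strip as a single commutor via the composition lemmas, and finishing with a factorisation of $s_{1,\,r}$ in the cactus group; the difference is which strip is cut. You peel off the top horizontal strip, identify it with $\sigma_{B,\otimes^{r-1}B}=\tau^{\otimes^{r-2}B}_{B,B}\dotsb\tau^{I}_{B,B}$ using Lemma~\ref{lem:lm} together with the horizontal composition in \eqref{eqn:sigma}, and reduce to the identity $s_{1,\,r}=s_{1,\,r-1}\,\sigma_{1,\,1,\,r}$, which you must (and correctly do) derive by combining \eqref{eqn:rel} with the third relation of Definition~\ref{defn:cactus}. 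The paper instead peels off the last vertical strip of cells, identifies it with $\sigma_{\otimes^{r}B,B}$ via \eqref{eqn:sigma} with the squares stacked vertically (Lemma~\ref{lemma:addt}), and uses $s_{1,\,r+1}=\sigma_{1,\,r,\,r+1}\,s_{1,\,r}$, which is literally a special case of \eqref{eqn:rel} and so needs no further computation. The paper's choice thus avoids the small cactus-group manipulation, while yours has the advantage that the strip removed is exactly the composite whose identification with a commutor is stated verbatim in Lemma~\ref{lem:lm}; your care about the order of composition (strip first, smaller evacuation second, matching $s_{1,\,r-1}\,\sigma_{1,\,1,\,r}$ as a composite of operators) is the right point to worry about and is handled correctly. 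The one step you pass over quickly, which the paper also leaves implicit, is that the leftmost vertical edge of the top strip carries the label $x_1$; this is forced by the convention for the degenerate triangles along the diagonal boundary together with the requirement that the first vertex of each row is the zero weight.
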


This generalises the construction of the Sch\"utzenberger involution (aka evacuation) of tableaux in \cite[Figure~A1-13]{Stanley1999}.

\begin{proof} The proof is by induction on $r$. The basis of the induction
is the case $r=2$.

The inductive step is based on the relation $s_{1,\,r+1} = \sigma_{1,\,r,\,r+1}\, s_{1,\,r}$ which is a special case of \eqref{eqn:rel}.
The inductive step is given by interpreting this in terms of growth diagrams. The inductive hypothesis gives the growth diagram for $s_{1,\,r}$ and \eqref{eqn:sigma} (with the squares stacked vertically) gives the growth diagram for $\sigma_{1,\,r,\,r+1}$.
\end{proof}

Note that it follows from the symmetry of the local rules that the
reflection of a triangular growth diagram is also a triangular growth diagram. This shows that $w'=s_{1\,r}(w)$ if and only if $w=s_{1\,r}(w')$
so evacuation is an involution.

\subsection{Promotion}
In this section we give the growth diagram description of promotion.
Define \Dfn{promotion} to be the action of $s_{1,\,r}\,s_{2,\,r}$.
The growth diagram for promotion consists of two rows.

The two row diagram for the case $r=3$ is shown in Figure~\ref{fig:pro}.
\begin{figure}
	\begin{center}
		\begin{tikzpicture}
		\matrix(c) [matrix of math nodes,row sep=2em,column sep=2em,minimum width=1em]
		{
			{}&  {}&  {}& {}& {}\\
			{}&  {}&  {}& {}& {}\\
			};
		\path[->] (c-1-1) edge (c-1-2);
		\path[->] (c-1-2) edge (c-1-3);
		\path[->] (c-1-3) edge (c-1-4);
		\path[->] (c-2-2) edge (c-2-3);
		\path[->] (c-2-3) edge (c-2-4);
		\path[->] (c-2-2) edge (c-1-2);
		\path[->] (c-2-3) edge (c-1-3);
		\path[->] (c-2-4) edge (c-1-4);
		\path[->] (c-2-4) edge (c-2-5);
		\end{tikzpicture}
	\end{center}
	\caption{A two row diagram}\label{fig:pro}
\end{figure}

\begin{defn} A \Dfn{two row growth diagram} is a two row diagram
	with vertices labelled by dominant weights and edges labelled by elements
	of crystals such that the labels of each cell satisfy the local rules
	as represented in Figure~\ref{fig:localrule}. We also require that the
	first vertex on each row is labelled by the zero weight.
\end{defn}
These conditions imply that a two row growth diagram can be reconstructed from the top edge viewed as a highest weight word.
\begin{prop}\label{prop:evac} Given a two row growth diagram
	let $w$ be the top edge viewed as a highest weight word and let
	$w'$ be the bottom edge viewed as a highest weight word. Then
	$w'=s_{1,\,r}\,s_{2,\,r}(w)$.
\end{prop}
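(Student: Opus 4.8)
The plan is to collapse the single row of cells of the two row diagram into one commutor and then to identify that commutor with $s_{1,\,r}\,s_{2,\,r}$ using the cactus relation \eqref{eqn:rel}.

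First I would read off the data of the diagram. Writing the top edge as $w=x_1\otimes\dotsb\otimes x_r$ as in \eqref{eqn:word}, the top edges of the $r-1$ cells are $x_2,\dotsc ,x_r$, while the first letter $x_1$ turns the corner and becomes the leftmost vertical edge; the bottom edges together with the final vertical edge that is pushed out on the right reconstitute the word $w'$. Since the first vertex of the bottom row carries the zero weight, the apex object of the leftmost cell is the unit $I$. I would then collapse the row: by \eqref{eqn:sigma} (equivalently Lemma~\ref{lemma:addb}) two horizontally adjacent cells compose to a single cell, so iterating across the whole row replaces the $r-1$ cells by one cell whose apex is $I$, whose left edge is $x_1$ and whose top edge is $x_2\otimes\dotsb\otimes x_r$. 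This single cell is $\tau^I_{C,\,C^{\otimes(r-1)}}$, and since $\tau^I_{B,C}=\sigma_{B,C}$ it is the commutor $\sigma_{C,\,C^{\otimes(r-1)}}$. Reading its output along the bottom and up the right gives $w'=\sigma_{C,\,C^{\otimes(r-1)}}(w)$; that is, $w'$ is obtained by commuting the first letter past the remaining $r-1$ letters.

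It then remains to identify this commutor with $s_{1,\,r}\,s_{2,\,r}$. The commutor $\sigma_{C,\,C^{\otimes(r-1)}}$ is the action on $\otimes^r C$ of the cactus generator $\sigma_{1,1,r}$: its image in $\fS_r$ sends $1\mapsto r$ and $i\mapsto i-1$ for $2\le i\le r$, which is exactly the permutation $\widehat{\sigma}_{1,1,r}$. Specialising the relation $s_{p,\,q}=\sigma_{p,\,m,\,q}\,s_{p,\,m}\,s_{m+1,\,q}$ of \eqref{eqn:rel} to $p=1$, split index $m=1$ and right index $q=r$ gives $s_{1,\,r}=\sigma_{1,1,r}\,s_{1,\,1}\,s_{2,\,r}$; using the conventions $s_{1,\,1}=1$ and $s_{2,\,r}^2=1$ this yields $\sigma_{1,1,r}=s_{1,\,r}\,s_{2,\,r}$. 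Hence $w'=\sigma_{1,1,r}(w)=s_{1,\,r}\,s_{2,\,r}(w)$, as claimed.

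The step I expect to be the main obstacle is the bookkeeping in the first paragraph: making precise that $x_1$ is indeed the leftmost vertical edge and that it re-emerges as the final letter of $w'$, and checking that the iterated collapse via \eqref{eqn:sigma} records the output in the order that reconstitutes $w'$ as a highest weight word. Once the row is correctly collapsed to $\sigma_{C,\,C^{\otimes(r-1)}}$, the remaining identification with $s_{1,\,r}\,s_{2,\,r}$ is a purely formal consequence of the cactus relations. As an alternative one can run the same induction on $r$ used for evacuation, adjoining one further cell to the row at each step via $\sigma_{1,1,r+1}$, but the direct collapse seems cleaner.
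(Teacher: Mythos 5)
Your argument is correct, but note that the paper itself supplies no proof of this proposition: it is stated bare, immediately after the definition of a two row growth diagram, with only the remark that it generalises the promotion construction of Rubey. What you have written is exactly the argument that the surrounding machinery is set up to deliver. Your first step --- collapsing the single strip of $r-1$ cells via \eqref{eqn:sigma} (equivalently Lemma~\ref{lemma:addb}, or in this telescoped form Lemma~\ref{lem:lm}) into the single commutor $\tau^I_{C,\otimes^{r-1}C}=\sigma_{C,\otimes^{r-1}C}$ acting as $\sigma_{1,1,r}$ --- is the same mechanism the paper uses to prove the triangular (evacuation) proposition, where the row being adjoined at each inductive step is precisely such a strip realising $\sigma_{1,r,r+1}$. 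Your second step, extracting $\sigma_{1,1,r}=s_{1,\,r}\,s_{2,\,r}$ from the relation $s_{p,\,q}=\sigma_{p,\,m,\,q}\,s_{p,\,m}\,s_{m+1,\,q}$ of \eqref{eqn:rel} with the convention $s_{1,\,1}=1$, is also how the paper's own definition of promotion as the action of $s_{1,\,r}\,s_{2,\,r}$ is meant to be matched to the diagram; alternatively one can see it as the composite of the evacuation proposition applied twice, but your direct route is cleaner. The only caveat, which you already flag, is the bookkeeping at the left end: the definition of a two row growth diagram only forces the leftmost vertical edge to be a highest weight element of $C$ of weight $\lambda_1$, and identifying it with $x_1$ (rather than merely an element of the same weight) is needed for the reconstruction claim; this is a gap in the paper's definition rather than in your argument, and is harmless in the minuscule or multiplicity-free cases the paper emphasises.
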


This generalises the construction of promotion on tableaux
given in~\cite[Figure~6]{Rubey2011}.

\subsection{Cylindrical diagrams}
Cylindrical growth diagrams for $\GL(N)$ are used to describe the
action of various elements of $\fC_r$ on standard tableaux of size $r$.
This construction is given in~\cite[\S~6]{Speyer2014},~\cite{White2015}
and~\cite{Akhmejanov2017}.
In this section we use local rules to generalise
this construction to any crystal.
\begin{defn}
	Let $\mathbb{I}\subset \bZ^2$ be the subset consisting of pairs $(i,j)$ such that $0\le j-i\le r$. A \Dfn{\cgd} of shape $\lambda$ is a function $\gamma$ from $\mathbb{I}$
	to highest weights such that
	\begin{itemize}
		\item $\gamma(i,j)=0$ if $j-i=0$
		\item $\gamma(i,j)=\lambda$ if $j-i=r$
		\item Every unit square satisfies the local rule.
	\end{itemize}
\end{defn}

The \Dfn{type} is the sequence of dominant weights
$\lambda^{(i)} = \gamma(i,i+1)$. The set of growth diagrams
of type $\lambda^{(i)}$ and shape $\lambda$ corresponds to 
highest weight elements of
$C(\lambda^{(1)})\otimes C(\lambda^{(2)})\otimes \dotsb \otimes C(\lambda^{(r)})$ of weight $\lambda$. This follows because all horizontal edges in the same column are labelled by the same crystal and all
vertical edges in the same row are labelled by the same crystal.
Furthermore these two sequences agree.

\begin{ex} This is a \cgd. The rows are standard tableaux.
	\begin{center}
		\begin{tikzpicture}
		\ytableausetup{smalltableaux}
		\matrix(c)[matrix of math nodes] {
\emptyset&\ydiagram{1} & \ydiagram{2} & \ydiagram{2,1} & \ydiagram{2,2} \\
&\emptyset&\ydiagram{1} & \ydiagram{1,1} & \ydiagram{2,1} & \ydiagram{2,2} \\
&&\emptyset&\ydiagram{1} & \ydiagram{2} & \ydiagram{2,1} & \ydiagram{2,2} \\		
		};
		\end{tikzpicture}
	\end{center}	
	\end{ex}

\begin{defn} A \Dfn{path} through $\mathbb{I}$ is a sequence $(i_0,j_0), (i_1,j_1), \dotsc , (i_r,j_r)$ of elements of $\mathbb{I}$ such that $i_0=j_0$ and, for $0\le k< r$, the difference $(i_{k+1},j_{k+1})-(i_k,j_k)$ is either $(-1,0)$ or $(0,1)$.
\end{defn}

Given a \cgd, $\gamma$ and a path we can restrict $\gamma$ to the path by taking the sequence $\gamma(i_0,j_0), \gamma(i_1,j_1), \dotsc , \gamma(i_r,j_r)$ to get a highest weight vector. For a fixed path this is a bijection between \cgd{s} and paths. The inverse map is giving by extending using the boundary conditions and the local rules.

Each path defines an operation on highest weight words. These
operations on a highest weight word $w$ are constructed by taking
the cylindrical growth diagram with $w$ on the top edge.
Then the result of applying the operation to $w$ is given by
reading the path as a highest weight word. For example,
the horizontal paths give the powers of the promotion operator.

The following is a generalisation of~\cite[Proposition~3.31]{White2015}.
This is an extension of the description of wall crossing
in~\cite[\S~6.1]{Speyer2014}. This is illustrated in~\cite[Figure~4]{White2015}.

\begin{defn}	
Define an operator $[p:q]$ on {\cgd}s of length $r$ and shape $\lambda$.
The action of $[p:q]$ on $\gamma$ satisfies
\begin{equation}
([p:q]\gamma)(i,j)=\begin{cases}
\gamma(i,j)&\text{if $\os{i:j}\cap\os{p:q}=\emptyset$} \\
\gamma(i,j)&\text{if $\os{p:q}\subset\os{i:j}$} \\
\gamma(i',j')&\text{if $\os{i:j}\subset\os{p:q}$}
\end{cases}
\end{equation}
where $i'=p+q-j$ and $j'=p+q-i$.
These conditions specify $([p:q]\gamma)$ on the path $(p,j)$, $j\ge p$ and so determine $([p:q]\gamma)$.
\end{defn}

\begin{thm} The map $s_{p,q}\mapsto [p:q]$ extends to an action of $\fC_r$ on highest weight words of length $r$ and shape $\lambda$.
\end{thm}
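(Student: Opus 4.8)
The plan is to verify that the operators $[p:q]$ satisfy the three defining relations of $\fC_r$ listed in Definition~\ref{defn:cactus}; since $\fC_r$ is presented by the generators $s_{p,q}$ together with those relations, this is precisely what is needed for $s_{p,q}\mapsto[p:q]$ to extend to an action. Because a {\cgd} is determined by its restriction to any single path, I identify {\cgd}s of shape $\lambda$ with highest weight words via a fixed path, so that verifying a relation among the operators $[p:q]$ on {\cgd}s is the same as verifying it for the induced operators on highest weight words.

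First I would record the geometric meaning of $[p:q]$. Writing $\os{i:j}$ for the integer interval, the points with $\os{i:j}\subseteq\os{p:q}$ form a triangular region $T_{p,q}=\{(i,j):p\le i\le j\le q\}$ whose base lies on the zero diagonal $j-i=0$ and whose apex is $(p,q)$. The map $R_{p,q}\colon(i,j)\mapsto(p+q-j,\,p+q-i)$ preserves each diagonal, fixes the apex, and restricts to an involution of $T_{p,q}$; on the interior region the defining formula is exactly $([p:q]\gamma)(i,j)=\gamma(R_{p,q}(i,j))$, while on the disjoint and enclosing regions $[p:q]$ acts as the identity. Thus, up to the partial--overlap points discussed below, $[p:q]$ reflects the triangle $T_{p,q}$ and leaves the rest of $\gamma$ fixed.

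Granting this description, the three relations become elementary statements about reflections. As $R_{p,q}$ is an involution preserving $T_{p,q}$ we obtain $[p:q]^2=1$. If $\os{p:q}\cap\os{k:l}=\emptyset$ then the triangles $T_{p,q}$ and $T_{k,l}$ have disjoint coordinate ranges, so the two operators modify disjoint regions and commute. If $\os{k:l}\subseteq\os{p:q}$ then $T_{k,l}\subseteq T_{p,q}$ and $R_{p,q}(T_{k,l})=T_{p+q-l,\,p+q-k}$; tracking the value of each composite region by region, the identity $[p:q][k:l]=[p+q-l:p+q-k][p:q]$ reduces to the reflection identity $R_{p,q}R_{k,l}R_{p,q}=R_{p+q-l,\,p+q-k}$, which holds because conjugating a reflection by $R_{p,q}$ reflects its centre about $(p+q)/2$. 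This recovers the third relation of Definition~\ref{defn:cactus}.

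The main obstacle, and the step I would treat most carefully, is the well-definedness underlying the second paragraph: the three cases of the definition do not cover the \emph{partial--overlap} points $(i,j)$ for which $\os{i:j}$ meets $\os{p:q}$ but neither interval contains the other, and on these points $[p:q]\gamma$ is pinned down only by extension along the boundary path together with the local rules. One must check both that the reflected triangle still satisfies the local rules --- here I would invoke the symmetry of the local rules already used for triangular growth diagrams, together with the facts that the base is identically the zero weight and the apex is fixed --- and that under composition these extension--determined values are transported consistently. The clean way to secure this is not to argue pointwise but to use that a {\cgd} is reconstructed from its restriction to a single path: for each relation it then suffices to choose one path adapted to the triangles involved (for instance the outer legs of $T_{p,q}$), on which each operator acts by an explicit reversal of a sub--interval of the word, and to deduce the global identity from this one--path identity via the reconstruction map. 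As an independent confirmation, the resulting action on highest weight words must agree with the action of $\fC_r$ furnished by the coboundary structure on crystals of~\cite{Henriques2006}, since on a generator $s_{p,q}$ both are computed by an evacuation--type triangular growth diagram.
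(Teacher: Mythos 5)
Your strategy is genuinely different from the paper's. The paper does not verify the defining relations of $\fC_r$ at all: it takes the action of $\fC_r$ on highest weight words as already existing (from the coboundary structure on crystals), observes via Proposition~\ref{prop:evac} that the triangular sub-diagram on $\{(i,j): 1\le i\le j\le q\}$ computes evacuation of the first $q$ letters, so that $[1{:}q]$ realises $s_{1,q}$, and then invokes the generating identity \eqref{eqn:gen}, $s_{p,q}=s_{1,q}s_{1,q-p}s_{1,q}$, to identify $[p{:}q]$ with the action of $s_{p,q}$. Your closing remark about agreement with the Henriques--Kamnitzer action is in fact the paper's entire proof --- except that for $p>1$ the element $s_{p,q}$ is \emph{not} computed by a triangular growth diagram (those always have the zero weight down the left edge), which is exactly why \eqref{eqn:gen} is needed.

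There is a genuine gap in your direct verification of the relations. The operators $[p{:}q]$ are not globally given by the point maps $R_{p,q}$: the three-case formula determines $[p{:}q]\gamma$ only on the path $(p,j)$, $j\ge p$, and the values on the partial-overlap region are produced by extension and do change. Worse, the formula cannot hold literally on all of $\mathbb{I}$: for $k>q$ the entire row $\{(k,j): k\le j\le k+r\}$ satisfies $\os{k{:}j}\cap\os{p{:}q}=\emptyset$ under the literal reading, so $[p{:}q]$ would fix a determining path and hence be the identity. So the assertion ``the two operators modify disjoint regions'' is false --- the partial-overlap strips of $[p{:}q]$ and $[k{:}l]$ interleave even when $\os{p{:}q}\cap\os{k{:}l}=\emptyset$ --- and the commutation relation does not follow from disjointness of the triangles $T_{p,q}$ and $T_{k,l}$. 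Your proposed repair (choose one path on which both operators act by explicit segment reversals) also fails here: already for $[1{:}2]$ and $[3{:}4]$ with $r=4$, every path from the zero diagonal to the shape diagonal meets a partial-overlap region of one of the two operators, so there is no common path on which both act by the closed formula. The conjugation relation has the same defect: the identity $R_{p,q}R_{k,l}R_{p,q}=R_{p+q-l,\,p+q-k}$ on points is correct but only controls the composite where all three operators act by their formulas, and the region-by-region tracking you defer is precisely the hard part. The paper's route avoids all of this because no relation ever needs to be checked; if you want to keep the presentation-checking strategy you must first reformulate $[p{:}q]$ purely as ``reverse a segment of the determining path and re-extend'' and then prove the relations for those path operators, which essentially forces you back to Proposition~\ref{prop:evac} and \eqref{eqn:gen}.
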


\begin{proof} This follows from Proposition~\ref{prop:evac} and
the identity \eqref{eqn:gen}.
\end{proof}

\begin{ex}\label{ex:sp} This is an example of a \cgd{} for a symplectic group.
	The rows are oscillating tableaux.
	\begin{center}
		\begin{tikzpicture}
		\ytableausetup{smalltableaux}
		\matrix(c)[matrix of math nodes]
		{ \emptyset & \ydiagram{1} & \ydiagram{2} & \ydiagram{2,1}%
			& \ydiagram{1,1} & \ydiagram{1} & \emptyset \\
			& \emptyset & \ydiagram{1} & \ydiagram{1,1} & \ydiagram{1}%
			& \ydiagram{1,1} & \ydiagram{1} & \emptyset \\
			& & \emptyset & \ydiagram{1} & \ydiagram{1,1} & \ydiagram{2,1}%
			& \ydiagram{2} & \ydiagram{1} & \emptyset \\
			& & &   \emptyset & \ydiagram{1} & \ydiagram{2} & \ydiagram{2,1}%
			& \ydiagram{1,1} & \ydiagram{1} & \emptyset \\
			&&&&\emptyset & \ydiagram{1} & \ydiagram{2} & \ydiagram{2,1}%
			& \ydiagram{1,1} & \ydiagram{1} & \emptyset \\
			&&&&& \emptyset & \ydiagram{1} & \ydiagram{1,1} & \ydiagram{1}%
			& \ydiagram{1,1} & \ydiagram{1} & \emptyset \\
			& &&&&& \emptyset & \ydiagram{1} & \ydiagram{1,1} & \ydiagram{2,1}%
			& \ydiagram{2} & \ydiagram{1} & \emptyset \\
		};
		\end{tikzpicture}
	\end{center}
	If we start with the oscillating tableau on the first row
	\begin{center}
		\begin{tikzpicture}
		\ytableausetup{smalltableaux}
		\matrix(c)[matrix of math nodes]
		{ \emptyset & \ydiagram{1} & \ydiagram{1,1} & \ydiagram{1}%
			& \ydiagram{1,1} & \ydiagram{1} & \emptyset \\
		};
		\end{tikzpicture}
	\end{center}
	Promotion is given by the second row
	\begin{center}
		\begin{tikzpicture}
		\ytableausetup{smalltableaux}
		\matrix(c)[matrix of math nodes]
		{ \emptyset & \ydiagram{1} & \ydiagram{2} & \ydiagram{2,1}%
			& \ydiagram{1,1} & \ydiagram{1} & \emptyset \\
		};
		\end{tikzpicture}
	\end{center}
	Evacuation is given by the column with head $\emptyset$
	\begin{center}
		\begin{tikzpicture}
		\ytableausetup{smalltableaux}
		\matrix(c)[matrix of math nodes]
		{ \emptyset & \ydiagram{1} & \ydiagram{2} & \ydiagram{2,1}%
			& \ydiagram{1,1} & \ydiagram{1} & \emptyset \\
		};
		\end{tikzpicture}
	\end{center}
\end{ex}
\begin{ex}	In this example we give the action of
$s_{3,6}$ on the oscillating tableau in Example~\ref{ex:sp}. We start by filling in the
	third row or the sixth column. Then we complete the diagram using the local rules. This gives:
	\begin{center}
		\begin{tikzpicture}
		\ytableausetup{smalltableaux}
		\matrix(c)[matrix of math nodes]
		{ \emptyset & \ydiagram{1} & \ydiagram{1,1} & \ydiagram{2,1}%
			& \ydiagram{1,1} & \ydiagram{1} & \emptyset \\
			& \emptyset & \ydiagram{1} & \ydiagram{2} & \ydiagram{1}%
			& \ydiagram{1,1} & \ydiagram{1} & \emptyset \\
			& & \emptyset & \ydiagram{1} & \ydiagram{2} & \ydiagram{2,1}%
			& \ydiagram{2} & \ydiagram{1} & \emptyset \\
			& & &   \emptyset & \ydiagram{1} & \ydiagram{1,1} & \ydiagram{1}%
			& \emptyset & \ydiagram{1} & \emptyset \\
			&&&&\emptyset & \ydiagram{1} & \ydiagram{1,1} & \ydiagram{1}%
			& \ydiagram{2} & \ydiagram{1} & \emptyset \\
			&&&&& \emptyset & \ydiagram{1} & \ydiagram{1,1} & \ydiagram{2,1}%
			& \ydiagram{1,1} & \ydiagram{1} & \emptyset \\
			& &&&&& \emptyset & \ydiagram{1} & \ydiagram{2} & \ydiagram{1}%
			& \ydiagram{1,1} & \ydiagram{1} & \emptyset \\
		};
		\draw[green,dashed] (c-3-3) -- (c-3-9);
		\draw[green,dashed] (c-1-6) -- (c-6-6);
		\end{tikzpicture}
	\end{center}
Then we take the oscillating tableaux on the first row.
\end{ex}

\printbibliography
 
\end{document}